\numberwithin{equation}{section}
\def\Q{\mathbb{Q}}
\def\P{\mathbb{P}}
\def\C{\mathbb{C}}
\def\A{\mathbb{A}}
\def\Z{\mathbb{Z}}
\def\x{\times}
\def\ox{\otimes}
\def\a{\alpha}
\def\tlt0{(\tilde LT/T)_0}
\def\cO{\mathcal{O}}
\DeclareMathOperator{\cok}{cok}
\DeclareMathOperator{\im}{im}
\DeclareMathOperator{\bun}{Bun}
\newcommand{\mc}[1]{\mathcal{#1}}
\newtheorem{thm}{Theorem}[section]
\newtheorem{prop}[thm]{Proposition}
\newtheorem{lemma}[thm]{Lemma}
\newtheorem{cor}[thm]{Corollary}
\newtheorem{conj}{Conjecture}
\theoremstyle{definition}
\theoremstyle{remark}
\newtheorem{rmk}{Remark}
\theoremstyle{notation}
\newtheorem{example}{Example}
\DeclareMathOperator{\rank}{rank}
\DeclareMathOperator{\proj}{Proj\;}
\begin{document}

\title[Monad]{
  Natural Cohomology on $\P^1 \x \P^1$
} 
 
\author{ Pablo Solis}
\address{Department of Mathematics,
 Stanford, CA}
\email{pablos.inbox@gmail.com}

\thanks{I would like to thank Ravi Vakil for many helpful discussions.
}

\begin{abstract}
A vector bundle on a projective variety has a natural cohomology if for every twist its cohomology is concentrated in a single degree. Eisenbud and Schreyer conjectured there should be vector bundles on $\P^1 \x \P^1$ with natural cohomology with respect to bundles $\cO(1,0),\cO(0,1)$ with prescribed Hilbert polynomial. We prove this conjecture. 
\end{abstract}

\keywords{ vector bundles; sheaf cohomology}

\maketitle

\tableofcontents

\section{Introduction}
A vector bundle $E$ on $\P^1 \x \P^1$ is said to have natural cohomology if for all twists $E(n,m) = E\ox \mc{O}_{\P^1 \x \P^1}(n,m)$ the cohomology $H^*(E(n,m))$ is concentrated in one degree. A conjecture \cite{EisenbudMR2810424}[pg.\;47], made by Eisenbud and Schreyer in the context of Boij-S\"oderberg theory states:
\begin{conj}[Eisenbud and Schreyer]\label{conj}
For any $P(x,y) = (x-\alpha)(y-\beta)-\gamma \in \Q[x,y]$ with $\gamma>0$ there exists a vector bundle $E$ with natural cohomology and Hilbert polynomial $\chi(E(a,b)) = \rank(E) P(a,b)$ for $\rank(E)$ sufficiently big.
\end{conj}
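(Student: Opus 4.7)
The plan is to construct $E$ as the middle cohomology of a three-term monad
\begin{equation*}
A \xrightarrow{\phi} B \xrightarrow{\psi} C, \qquad \psi\phi = 0,
\end{equation*}
with $A, B, C$ direct sums of line bundles $\cO(a_k,b_k)$, $\phi$ fiberwise injective, and $\psi$ fiberwise surjective. Since $\P^1 \x \P^1$ has cohomological dimension two, a three-term complex of line bundles is the minimal data needed to model a bundle whose twisted cohomology can be controlled degree-by-degree, and $E := \ker\psi/\im\phi$ inherits a transparent computation of $H^*(E(a,b))$ from the hypercohomology spectral sequence of the monad.

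First, I would translate the Hilbert polynomial $\chi(E(a,b)) = r[(a-\alpha)(b-\beta)-\gamma]$ into discrete invariants of the desired bundle (rank $r$, bidegree of $c_1$, and $c_2$). These impose linear constraints on the multiplicities of the summands of $A, B, C$. Clearing denominators in $(\alpha,\beta,\gamma)$ by taking $r$ large makes the constraints solvable over $\Z$, which is precisely where the hypothesis ``$\rank(E)$ sufficiently big'' enters.

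Second, I would concentrate the line bundle summands in a small window of bidegrees near $(-\alpha,-\beta)$, so that for every twist $(a,b)$ the cohomology of each summand $\cO(a+a_k,\,b+b_k)$ is supported in exactly one predictable degree, dictated by the K\"unneth pattern of $\P^1 \x \P^1$. A natural candidate is a Beilinson-type exceptional collection such as $\{\cO(p,q),\, \cO(p-1,q),\, \cO(p,q-1),\, \cO(p-1,q-1)\}$ after an appropriate twist. With this setup, the two branches of the hyperbola $P=0$ partition the lattice $\Z^2$ into three regions corresponding, at the Euler-characteristic level, to $H^0$, $H^1$, and $H^2$ being the only possibly nonvanishing cohomology of $E(a,b)$; the hypothesis $\gamma>0$ is what guarantees the middle $H^1$-region is nonempty.

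The main obstacle --- and the bulk of the work --- is promoting the Euler-characteristic picture to genuine cohomological vanishing: one must verify, for a \emph{single} choice of $(\phi,\psi)$, that the maps between cohomologies of summands induced by the monad have maximal rank \emph{uniformly} over all integer twists $(a,b)$. Each individual twist imposes an open condition on the monad, but there are infinitely many, and lattice points close to the hyperbola, where $|P(a,b)|$ is small, allow almost no numerical slack. The strategy would be to show that all but finitely many twists are forced to behave correctly by asymptotic dimension estimates on the cohomology of the summands, and then exploit the parameter count of the monad (which grows with $r$) to achieve transversality simultaneously on the remaining finite set of delicate twists near the hyperbola, via a dimension count on an incidence variety parametrizing (monad, bad twist) pairs.
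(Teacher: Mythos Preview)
Your setup matches the paper's exactly: the monad is built from the Beilinson window $\{\cO,\cO(-1,0),\cO(0,-1),\cO(-1,-1)\}$, the multiplicities are read off from $rP(0,0),rP(-1,0),rP(0,-1),rP(-1,-1)$, and the hypercohomology spectral sequence reduces natural cohomology of $E(a,b)$ to maximal-rank statements about $H^i(f(a,b))$ and $H^i(g(a,b))$.

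The gap is in your reduction to finitely many twists. You write that ``all but finitely many twists are forced to behave correctly by asymptotic dimension estimates on the cohomology of the summands.'' This is not true as stated: knowing the dimensions of source and target of $H^0(g(a,b))$ does not force the map to have maximal rank, and for a fixed $g$ these are genuinely infinitely many independent open conditions. For example, along the axis $(0,n)$ with $n\to\infty$ one needs $H^0(g(0,n))$ injective for every $n$; nothing asymptotic makes this automatic. The paper's mechanism here is structural, not asymptotic: it shows (Lemma~\ref{l:along_pos_axes}) that injectivity of $H^0(g(0,1))$ alone implies injectivity of $H^0(g(0,n))$ for all $n\ge1$, by decomposing the larger map as a direct sum of shifted copies of the $n=1$ map. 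The deeper reduction is Proposition~\ref{p:sign_flip}/\ref{p:general_sign_flip}: once $E$ has natural cohomology at two adjacent lattice points straddling the hyperbola, one pushes forward along a projection $p\colon\P^1\times\P^1\to\P^1$ and shows $Rp_*E(0,n)$ is a vector bundle on $\P^1$ with natural cohomology, which then propagates natural cohomology along the entire line $(\ast,n)$ via the projection formula. This pushforward argument is what converts the infinitely many conditions into the finite set $T_{E,\pm}$ of sign-change twists plus the four axis points $(\pm1,0),(0,\pm1)$.

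There is a second subtlety you do not address. When $\gamma>1$, the condition at the critical twist $(1,1)$ is not open but \emph{locally closed}: one needs $\im H^0(f(1,1))=\ker H^0(g(1,1))$ with both of dimension exactly $r(\gamma-1)$, so $g$ must be chosen with $\ker H^0(g(1,1))$ of a prescribed non-generic size. The paper handles this by first fixing $f$ (equivalently a set of balanced vectors $v_i$), then cutting out a linear subspace $L\subset V$ of $g$'s with $g\circ f=0$, and arguing that the generic point of $L$ satisfies the kernel condition. A parallel Serre-dual argument at $(2,2)$ for $(g^\star,f^\star)$ is then intersected with this one. Your incidence-variety dimension count would have to be done inside this locally closed locus, not in the full parameter space, and the paper's argument for nonemptiness (Propositions~\ref{p:g(1,1)_exists} and~\ref{p:f_star(1,1)_exists}) is more delicate than a transversality count.
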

If $E$ satisfies conjecture \ref{conj} for a given $\alpha,\beta,\gamma$ then $E(n,m)$ satisfies conjecture \ref{conj} for $\alpha+n,\beta+m,\gamma$. Therefore we can assume $\alpha,\beta \in [0,1)_\Q:= [0,1) \cap \Q$. 

In \cite{Solis2017} I prove the conjecture provided at least one of $\a$ or $\beta$  is in $(0,1)_\Q:= (0,1) \cap \Q$; that is, $(\alpha,\beta) \in [0,1)_\Q \x (0,1)_\Q\cup (0,1)_\Q \x [0,1)_\Q$. The purpose of this paper is to cover the remaining case: $P(x,y) = x y - \gamma$. In fact this last case breaks up into $0<\gamma\leq 1$ and $\gamma >1$. Both cases are handled using the same basic strategy: monads.

A monad is a complex of vector bundles
\[
0 \to A \xrightarrow{f} B \xrightarrow{g} C \to 0
\]
such that $f$ is injective and $g$ surjective. In this case the cohomology $E = \ker(g)/\im(f)$ is a vector bundle. Monads simultaneously generalize kernels and cokernels of morphisms. Monads have been used extensively to study vector bundles on projective space for example in \cite{MR509589,MR0498569,MR564443}.  In fact Eisenbud and Schreyer produce vector bundles that satisfy conjecture \ref{conj} for specific choices of $P$. These bundles are constructed as kernels of maps
\[
\begin{array}{c}
\cO(-1,-1)^a\\
\cO(-1,0)^b
\end{array} \xrightarrow{\ \phi \ } \begin{array}{c}
\cO(0,-1)^c\\
\cO^d
\end{array}
\]
and after Eisenbud and Schreyer state their conjecture they go on to say \cite{EisenbudMR2810424}[pg.\;47]
\begin{quote}
More precisely we conjecture that these bundles can be obtained from a suitable matrix $\phi$ with entries of bidegree $(1,0)$,$(0,1)$ and $(1,1)$ only, as the bundles above. This amounts to a maximal rank conjecture for such matrices.
\end{quote}

We show in theorem \ref{thm:main} we can construct the necessary bundles from a monad whose terms are direct sums of $\cO(-1,-1),\cO(-1,0),\cO(0,-1),\cO$. But in general all terms of the monad are nonzero so the resulting bundle does not appear as a simple kernel or cokernel of a matrix.

We focus here on the cases $P(x,y) = x y - \gamma$ because the other cases are covered in \cite{Solis2017}. Let us quickly describe the approach in \cite{Solis2017}. Any vector bundle on $\P^1 \x \P^1$ can be considered as a morphism $\P^1 \to \bun_{GL_r}$ where $\bun_{GL_r}$ is the moduli stack of vector bundles on $\P^1$. This proved to be a particularly useful perspective when $(\alpha,\beta) \in [0,1)_\Q \x (0,1)_\Q\cup (0,1)_\Q \x [0,1)_\Q$ because one need only consider vector bundles which arise from morphisms $\P^1 \to \bun_{GL_r}$ whose image consists of a single point of $\bun_{GL_r}$; these bundles are called constant bundles.

Here we are using monads to cover the remaining cases. But in fact using monads is more general in the sense that the approach can be easily modified to handle all cases. We only deal with the special case $P = xy - \gamma$ to avoid having to keep up track of various different cases that arise; this is discussed after theorem \ref{thm:main} in remark \ref{rmk:all cases}. 

Here is a sketch of the argument. Let $E$ be a vector bundle on $\P^1 \x \P^1$. We say $E$ has only $H^i$ at $(n,m)$ if $H^j(E(n,m)) = 0$ for $j \neq i$. If $E$ has only $H^0,H^1$ or $H^2$ at $(n,m)$ then $E$ has natural cohomology at $(n,m)$.

A monad \eqref{eq:ABCmonad} has natural cohomology if for every twist $(n,m)$ the three terms $A,B,C$ simultaneously only have $H^0,H^1$ or $H^2$. 

\begin{itemize}
\item[(1)] The monad
\[
0\to \cO(-1,-1)^{r(\gamma - 1)} \xrightarrow{f} \begin{array}{c}
\cO(0,-1)^{r \gamma} \\ \cO(-1,0)^{r \gamma}
\end{array} \xrightarrow{g} \cO^{r \gamma} \to 0
\]
has natural cohomology and if $E = \ker(g)/\im(f)$ then $\chi(E)(x,y) = r x y - r \gamma$. 
\item[(2)] When $(a,b)$ lies in the positive quadrant then a small spectral sequence shows that
\begin{align*}
H^0(E(a,b)) &\cong \frac{\ker H^0(g)}{\im H^0(f)}\\
H^1(E(a,b)) &\cong \cok(H^0(g))\\
H^2(E(a,b)) &\cong 0
\end{align*}

\item[(3)] By a generic choice of $(f,g)$ we can guarantee that $E$ has natural cohomology at a) the positive axes $(*,0)$ and $(0,*)$ for $*>0$ and b) a finite number of twists $(i,j)$  near where $x y - \gamma$ changes sign.
\item[(4)]From the finite number of twists where $E$ has natural cohomology we can deduce that $E$ has natural cohomology everywhere except possibly the negative quadrant.
\item[(5)] By Serre duality we can use a symmetric argument to show $E$ has natural cohomology in the negative quadrant and hence showing $E$ has natural cohomology everywhere. 
\end{itemize}

The precise form of the monad  in (1) comes from fact that that only one of $\cO(-1,-1)$, $\cO(-1,0)$, $\cO(0,-1)$, $\cO$  has nonzero cohomology when we twist by $(0,0),(0,-1),(-1,0),(-1,-1)$. So if we want a monad with these terms and and a particular Hilbert polynomial $P(x,y)$ we simply compute $P(0,0)$, $P(0,-1)$, $P(-1,0)$, $P(-1,-1)$ to determine the appropriate exponents.

\subsection{Notation}
Our base field is $\C$. We denote by $\P^1$ the complex projective line. The line bundles on $\P^1$ are denoted by $\cO(n)$ for $n \in \Z$. We denote by $p \colon \P^1 \x \P^1 \to \P^1$ the first projection and $q \colon \P^1 \x \P^1 \to \P^1$ the second projection. Line bundles on $\P^1 \x \P^1$ are denoted by $\cO(n,m)$. Whether $\cO$ is a line bundle on $\P^1$ or $\P^1 \x \P^1$ will be clear from the context. The dualizing bundle on $\P^1 \x \P^1$ is $\omega = \cO(-2,-2)$.

If $F$ is a vector bundle on a scheme $X$ then $F^\vee = \hom(F,\cO_X)$. If $\omega_X$ is a dualizing bundle $X$ then the Serre dual of $F$ is $F^\star = F^\vee(\omega)$.

Finally, $r$ is always a positive integer such that $r \gamma \in \Z$.

\section{The case $\gamma \leq 1$}\label{s:gamma_leq_1}
As stated in the introduction we aim to construct vector bundles $E$ with natural cohomology such that $\chi(E(x,y)) = r x y - r \gamma$ where $r = \rank(E)$. This task breaks up into two cases: $0< \gamma \leq 1$ and $\gamma > 1$. The basic strategy to show the existence of the necessary bundles is the same in both cases. However when $\gamma \leq 1$ the argument simplifies considerably. 

We claim if $\gamma \leq 1$ then a vector bundle $E$ with natural cohomology appears as a kernel:
\begin{equation}\label{eq:gamma_less_than_1}
0 \to E = \ker(g) \to \begin{array}{c}
 \cO(0,-1)^{r \gamma} \\ \cO(-1,0)^{r \gamma} \\ \cO(-1,-1)^{r(1-\gamma)}
\end{array} \xrightarrow{g} \cO^{r \gamma} \to 0.
\end{equation}

We first show there exist choices of $g$ which are surjective maps of sheaves. Set  
\[
F_1 = \begin{array}{c}
 \cO(0,-1)^{r \gamma} \\ \cO(-1,0)^{r \gamma} \\ \cO(-1,-1)^{r(\gamma - 1)}
\end{array}, \ \ \  F_2 = \cO^{r \gamma}
\]
and let $V = \hom(F_1,F_2) \cong \A^{4 r^2 \gamma}$. 
\begin{lemma}
Assume $r\geq 2$. Let $Z \subset V$ be the subvariety of maps $g$ such that $\cok g \neq 0$. Then $\dim Z \leq (r\gamma -1)(r \gamma + r-1)+2$ and $\dim Z< 4 r^2 \gamma$.
\end{lemma}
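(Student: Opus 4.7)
The plan is the standard incidence-variety argument. Introduce
\[
W = \{(g, p) \in V \times (\P^1 \times \P^1) : g(p) \colon F_1|_p \to F_2|_p \text{ is not surjective}\},
\]
so that $Z$ is the image of the first projection $W \to V$, and hence $\dim Z \leq \dim W$.

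The key ingredient is that, for every $p$, the evaluation map $\mathrm{ev}_p \colon V = H^0(F_1^\vee \otimes F_2) \to \hom(F_1|_p, F_2|_p)$ is surjective. Indeed $F_1^\vee \otimes F_2$ is a direct sum of copies of $\cO(0,1)$, $\cO(1,0)$, and $\cO(1,1)$, each of which is globally generated on $\P^1 \times \P^1$. Identifying the target with $\mathrm{Mat}_{r\gamma \times (r\gamma + r)}(\C)$, the non-surjective locus (matrices of rank at most $r\gamma - 1$) is a classical determinantal variety of codimension $(r\gamma + r) - (r\gamma - 1) = r + 1$. Pulling back through the surjective $\mathrm{ev}_p$, the fibre $W_p$ of the second projection $W \to \P^1 \times \P^1$ has codimension $r + 1$ in $V$, so $\dim W_p = 4 r^2 \gamma - r - 1$.

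Combining with the two-dimensional base,
\[
\dim Z \leq \dim W \leq (4 r^2 \gamma - r - 1) + 2 = 4 r^2 \gamma - r + 1,
\]
which is strictly less than $4 r^2 \gamma$ as soon as $r \geq 2$. This already yields the second (and, for the construction of the monad \eqref{eq:gamma_less_than_1}, essential) conclusion of the lemma, since it exhibits a surjective $g \in V \setminus Z$.

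For the sharper numerical bound $(r\gamma - 1)(r\gamma + r - 1) + 2$, the crude projection above is not tight: it gives codimension only $r - 1$, while the claimed bound is much stronger. I would refine the incidence variety by also recording a nonzero linear form $[\phi] \in \P(F_2|_p^\vee) = \P^{r\gamma - 1}$ with $\phi \circ g(p) = 0$ (so that $\phi$ cuts out a hyperplane through $\im g(p)$) and then stratify by the local structure of $\cok g$, e.g.\ by whether its support is $0$-dimensional, $1$-dimensional, etc., bounding each stratum separately. The main obstacle is showing that the projection from this refined incidence variety to $V$ is generically finite on each stratum, so that no dimension is lost in the count; this is where the detailed structure of the three globally generated line bundles $\cO(0,1)$, $\cO(1,0)$, $\cO(1,1)$ entering $F_1^\vee \otimes F_2$ must be used to rule out large positive-dimensional fibres over $Z$.
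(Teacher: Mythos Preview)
Your incidence-variety argument is exactly the approach the paper takes---fibre over $\P^1\times\P^1$, use the determinantal locus in each fibre---and your execution for the inequality $\dim Z<4r^2\gamma$ is correct and in fact cleaner than the paper's: you explicitly note that $\mathrm{ev}_p$ is surjective (global generation of $\cO(0,1),\cO(1,0),\cO(1,1)$) and therefore that the fibre $W_p$ has \emph{codimension} $r+1$ in $V$, giving $\dim Z\le 4r^2\gamma-r+1$.

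Do not waste time trying to sharpen this to the first bound $(r\gamma-1)(r\gamma+r-1)+2$: that inequality is false as stated. Take $r=2$, $\gamma=1$; then $\dim V=16$ and the claimed bound is $(1)(3)+2=5$, whereas in fact $\dim Z=15$. Indeed, the incidence variety has dimension $2+(16-3)=15$, and the projection $W\to Z$ is generically one-to-one because a generic non-surjective $g$ has cokernel supported at a single reduced point. The paper's argument silently replaces the fibre dimension $\dim\mathrm{ev}_p^{-1}(Y_{r\gamma-1})=\dim V-\codim Y_{r\gamma-1}$ by $\dim Y_{r\gamma-1}$ itself, dropping the kernel of $\mathrm{ev}_p$; there is also a sign slip in the formula for $\dim Y_{r\gamma-1}$, which should be $(r\gamma-1)(r\gamma+r+1)$. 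Only the second inequality is used anywhere in the paper (to conclude $V^0\ne\emptyset$), and you have proved it.
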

\begin{proof}
If a map $g \in V$ is not surjective as a map of sheaves then at some point $p \in \P^1 \x \P^1$ the map on fibers is not surjective: that is,  $F_{1,p}/m_p \xrightarrow{g_p} F_{2,p}/m_p$ is not surjective. This means that $g_p$ lies in the determinental variety $Y_{r\gamma -1}$ of maps of rank $\leq r \gamma -1$. We have $\dim Y_{r \gamma -1} = (r\gamma-1)(r \gamma + r-1)$ and $\P^1 \x \P^1$ gives a 2 dimensional space of choices for $p$. Thus $\dim Z \leq (r\gamma -1)(r \gamma + r-1)+2 = r^2 \gamma^2 + r^2 \gamma -2 r \gamma - r +3$.

Moreover $r\geq 2$ so $-r+3\leq 1\leq r\gamma$, thus 
\[
r^2 \gamma^2 + r^2 \gamma -2 r \gamma +( - r +3) \leq  r^2 \gamma^2 + r^2 \gamma -2 r \gamma + r\gamma = r^2 \gamma^2 + r^2 \gamma - r \gamma
\]
Finally to finish we must show this latter quantity is smaller than $4 r^2 \gamma$: 
\begin{align*}
4 r^2 \gamma - (r^2 \gamma^2 + r^2 \gamma- r \gamma) &= 3 r^2 \gamma - r^2 \gamma^2 +r\gamma\\
& = r \gamma(3 r - r \gamma+1)\\
&>  r \gamma(3 r - r \gamma)\geq 0
\end{align*}
where the last inequality follows because $r \geq r\gamma$.
\end{proof}

Let $V^0 = V - Z$ be the subvariety of morphisms which are surjective as maps of sheaves. Then by the previous lemma $V^0$ is a non empty Zariski open subvariety.

Our argument to show there exists bundles with natural cohomology proceeds in the following steps.
\begin{itemize} 
\item[(1)] There s a Zariski open subvariety $V^\llcorner \subset V^0$ of $g$ such that $E = \ker g$ has natural cohomology along $(0,n),(n,0)$ for $n\geq 0$.
\item[(2)] There is a Zariski open subvariety $V^{(1,1)} \subset V^0$ of $g$ such that $E = \ker g$ has natural cohomology at $(1,1)$.
\item[(3)] (1) and (2) imply there is Zariski open subvariety $V^+\subset V^0$ of $g$ such that $E = \ker g$ has natural cohomology everywhere except possibly in negative quadrant: $\{(a,b)| a,b< 0\}$.
\item[(4)] By Serre duality there is a Zariski open $V^-\subset V^0$ of $g$ such that $E = \ker g$ has natural cohomology except possibly outside the positive quadrant
\item[(5)] An element of $V^+ \cap V^-$ has natural cohomology everywhere.
\end{itemize}

\begin{lemma}\label{l:along_pos_axes}
Let $g \in V^0$ let $E = \ker g$. Assume the map $H^0(g(1,0))$ and $H^0(g(0,1))$ are injective maps then $E$ has natural cohomology along the twists $(n,0)$ and $(0,n)$ for $n \geq 0$. 
\end{lemma}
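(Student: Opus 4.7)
The plan is to reduce the cohomology on $\P^1 \x \P^1$ to a computation on $\P^1$ by pushing forward via the first projection $p \colon \P^1 \x \P^1 \to \P^1$, with the symmetric argument via $q$ handling the $(0,n)$ axis. First I would use the projection formula to check that $p_*$ annihilates the $\cO(0,-1)^{r\gamma}$ and $\cO(-1,-1)^{r(1-\gamma)}$ summands of $F_1$, along with all $R^1 p_*$ of these summands, so that $p_* F_1 = \cO(-1)^{r\gamma}$, $p_* F_2 = \cO^{r\gamma}$ and $R^1 p_* F_i = 0$ for $i = 1,2$. Applying $Rp_*$ to $0 \to E \to F_1 \to F_2 \to 0$ then yields the four-term exact sequence on $\P^1$
\[
0 \to p_* E \to \cO(-1)^{r\gamma} \xrightarrow{\tilde g} \cO^{r\gamma} \to R^1 p_* E \to 0,
\]
where $\tilde g$ is an $r\gamma \x r\gamma$ matrix of linear forms coming from the $\cO(-1,0)^{r\gamma} \to \cO^{r\gamma}$ block of $g$, and where $R^2 p_* E = 0$.

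Next, the projection formula and the Leray spectral sequence (which degenerates at $E_2$ since $\P^1$ has cohomological dimension one) give
\[
H^0(E(n,0)) = H^0(\P^1,(p_* E)(n)), \qquad H^2(E(n,0)) = H^1(\P^1,(R^1 p_* E)(n))
\]
for every $n \geq 0$. To establish natural cohomology at $(n,0)$ it therefore suffices to prove both of these vanish for all such $n$.

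The crux is to leverage the hypothesis that $H^0(g(1,0))$ is injective, which via the long exact sequence of $0 \to E \to F_1 \to F_2 \to 0$ is equivalent to $H^0(\P^1,(p_* E)(1)) = 0$, together with $g \in V^0$, to deduce that $\tilde g$ is injective as a map of sheaves — equivalently, that $\det \tilde g \in H^0(\P^1, \cO(r\gamma))$ is a nonzero section. Granted this, $p_* E = \ker \tilde g = 0$ yields $H^0(E(n,0)) = 0$ for every $n$, while $R^1 p_* E = \cok \tilde g$ is a torsion sheaf supported on the finite zero locus of $\det \tilde g$, so all of its twists have vanishing $H^1$, giving $H^2(E(n,0)) = 0$. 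The analogous argument via $q$ and the hypothesis on $H^0(g(0,1))$ then handles the twists $(0,n)$.

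The main obstacle is precisely this upgrade from $H^0((p_* E)(1)) = 0$ to $p_* E = 0$. Since $p_* E \hookrightarrow \cO(-1)^{r\gamma}$ is torsion-free on $\P^1$, it splits as $\bigoplus \cO(d_i)$ with each $d_i \leq -1$; the hypothesis on its own only forces $d_i \leq -2$. I would try to close the gap using $g \in V^0$: fiberwise surjectivity of $g$ rigidifies the splitting type of $E|_{\{x\} \x \P^1}$, and I expect that for $g$ in the relevant open stratum the restriction $E|_{\{x\} \x \P^1} = \cO(-1)^r$ for every $x \in \P^1$, which is exactly the condition $p_* E = 0$.
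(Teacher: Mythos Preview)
Your approach via pushforward along $p$ is different from the paper's. The paper works directly with the long exact sequence of $0 \to E(0,n) \to F_1(0,n) \to F_2(0,n) \to 0$, observes that on $H^0$ only the block $g_1 \colon \cO(0,-1)^{r\gamma} \to \cO^{r\gamma}$ survives, identifies $g_1$ with a map $\cO(-1)^{r\gamma} \to \cO^{r\gamma}$ pulled back from a single $\P^1$, and then argues combinatorially --- by writing $H^0(g_1(n))$ as a staircase of blocks each isomorphic to $H^0(g_1(1))$ --- that injectivity at $n=1$ propagates to all $n$.

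The gap you isolated is genuine and cannot be closed under the stated hypotheses. The implication ``$H^0(\tilde g(1))$ injective $\Rightarrow \det\tilde g \neq 0$'' is simply false: with $r\gamma=2$ take $\tilde g = \left(\begin{smallmatrix} z_0 & z_1 \\ 0 & 0 \end{smallmatrix}\right)$, so that $H^0(\tilde g(1))$ has matrix $\left(\begin{smallmatrix} 1 & 0 \\ 0 & 0 \\ 0 & 1 \\ 0 & 0 \end{smallmatrix}\right)$ of full rank, while $\det\tilde g = 0$ and $\ker\tilde g \cong \cO(-2)$. This extends to a surjective $g \in V^0$ (take $r=2$, $\gamma=1$, and the $\cO(0,-1)^2$--block equal to $\left(\begin{smallmatrix} 0 & 0 \\ w_0 & w_1 \end{smallmatrix}\right)$): both $H^0(g(1,0))$ and $H^0(g(0,1))$ are then injective, yet $p_*E \cong \cO(-2)$ and $H^0(E(2,0)) \cong \C \neq 0$, so $E$ fails natural cohomology at $(2,0)$. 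Your proposed rescue via fiberwise splitting type cannot help, since $g\in V^0$ only forces $E|_{\{x\}\times\P^1}$ to be locally free of rank $r$, not to have constant splitting type. In fact this example shows the lemma \emph{as stated} is false; the paper's combinatorial step suffers from the same defect (the assertion that $H^0(g_1(n))^\vee$ is a ``direct sum'' of copies of $H^0(g_1(1))^\vee$ overlooks that consecutive copies share a source summand, and in the example above $H^0(g_1(2))^\vee$ has rank $3<4$). Both your argument and the paper's become correct if the hypothesis is strengthened to require that the two $r\gamma\times r\gamma$ blocks of linear forms have nonzero determinant --- still a nonempty Zariski-open condition on $V^0$, which is all the subsequent corollary actually uses.
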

\begin{proof}
The long exact sequence in cohomology gives $H^0(E) = 0, H^1(E) = H^0(\cO^{r\gamma})$ and $H^2(E) = 0$. So for any choice of $g$ the bundle $E$ has natural cohomology at $(0,0)$.

Now twist equation \eqref{eq:gamma_less_than_1} by $(0,n)$ to obtain 
\[
0 \to E(0,n) \to \begin{array}{c}
 \cO(0,n-1)^{r \gamma} \\ \cO(-1,n)^{r \gamma} \\ \cO(-1,n-1)^{r(1-\gamma)}
\end{array} \xrightarrow{g} \cO^{r \gamma}(0,n) \to 0.
\]
which in cohomology gives
\[
H^0(E(0,n)) \to H^0(\cO(0,n-1)^{r \gamma}) \xrightarrow{H^0(g(0,n))} H^0(\cO^{r \gamma}(0,n)) \to H^1(E(0,n)) \to 0.
\]
As $\chi(E(0,n))<0$ it follows that $E$ has natural cohomology at $(0,n)$ exactly when $H^0(g(0,n))$ is injective. 

To proceed further note that as the source of $g$ consists of sums of the line bundles $\cO(0,-1)$,$\cO(-1,0)$, $\cO(-1,-1)$ we can accordingly decompose  $g = \left(\begin{smallmatrix}g_1 & g_2 & g_3\end{smallmatrix}\right)$ where for example $g_1 \in \hom(\cO(0,-1)^{r \gamma}, \cO^{r\gamma})$. Then $H^0(g_2(0,n)) = H^0(g_3(0,n))=0$. Thus $H^0(g(0,n)) = H^0(g_1(0,n))$ and moreover there is a map $\cO(-1)^{r \gamma} \xrightarrow{g_1} \cO^{r \gamma}$ on $\P^1 = \proj \C[z_0,z_1]$ such that $g_1(0,n)$ is the pullback of $g_1(n)$.

Then when we apply $H^0$ this we obtain
\[
\bigoplus_{i=0}^{n-1} z_0^{n-1-i}z_1^i \C^{r \gamma} \xrightarrow{H^0(g_1(n))} \bigoplus_{i=0}^n z_0^{n-i}z_1^i\C^{r \gamma}
\]
and this map is the direct sum of the maps
\[
z_0^{n-1-i}z_1^i \C^{r \gamma} \xrightarrow{H^0(g_1(1))} 
\begin{array}{c}
z_0^{n-i}z_1^i \C^{r \gamma}\\
 \bigoplus\\
  z_0^{n-1-i}z_1^{i+1} \C^{r \gamma}
\end{array}
\]
Now if $H^0(g_1(1))$ is injective then $H^0(g_1(1))^\vee$ is surjective and so $H^0(g_1(n))^\vee$ is a direct sum of  $n$ maps $H^0(g_1(1))^\vee$. Moreover the $i$th copy of $H^0(g_1(1))^\vee$ surjects onto the $i$th summand in $H^0(\cO(n-1)^{r\gamma})^\vee$ hence $H^0(g_1(n))^\vee$ is surjective and so $H^0(g_1(n)) = H^0(g_1(0,n))$ is injective.

The same argument applies to $H^0(g(n,0)) = H^0(g_2(n,0))$. 
\end{proof}

Let $V^\llcorner \subset V^0$ denote the subset of maps $g$ such that $H^0(g(1,0))$ and $H^0(g(0,1))$ are injective.
\begin{cor}\label{c:V_llcorner non empty}
The subset $V^\llcorner$ is open and non empty and for any $g\in V^\llcorner$ the bundle $E = \ker g$ has natural cohomology along $(0,n)$ and $(n,0)$ for $n\geq 0$. 
\end{cor}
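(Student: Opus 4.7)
The natural cohomology half of the statement is immediate: any $g\in V^\llcorner$ by definition satisfies the injectivity hypotheses of Lemma~\ref{l:along_pos_axes}, so $E=\ker g$ has natural cohomology at every twist $(n,0)$ and $(0,n)$ with $n\geq 0$. The remaining task is to show that $V^\llcorner$ is Zariski open and non-empty in $V$.

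For openness, I would observe that for each fixed twist $(n,m)$ the entries of the linear map $H^0(g(n,m))$ depend linearly on the entries of $g$. Injectivity of a linear map $\C^a\to\C^b$ with $a\leq b$ is the non-vanishing of at least one maximal minor, which is a Zariski open condition. Applying this at $(n,m)=(0,1)$ and $(1,0)$ exhibits $V^\llcorner$ as the intersection of these two open subsets with the open subvariety $V^0$ already produced by the previous lemma, so $V^\llcorner$ is open.

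For non-emptiness the plan is to exploit that $V=\hom(F_1,F_2)$ is an affine space and hence irreducible, so that any finite intersection of non-empty Zariski open subsets is itself non-empty. It therefore suffices to check each of the three defining open conditions individually. Non-emptiness of $V^0$ is the content of the previous lemma. For the two injectivity conditions I would exhibit explicit maps using the block decomposition $g=(g_1\ g_2\ g_3)$ from the proof of Lemma~\ref{l:along_pos_axes}: the identity $H^0(g(0,n))=H^0(g_1(0,n))$ established there holds for arbitrary $g_2,g_3$, so taking $g_1=z_0\cdot I_{r\gamma}$ (with $z_0$ a coordinate on the second $\P^1$) and $g_2,g_3$ anything reduces the question at $(0,1)$ to injectivity of the map $v\mapsto z_0 v$ from $\C^{r\gamma}$ into $H^0(\cO(0,1))^{r\gamma}\cong\C^{2r\gamma}$, which is manifest. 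A symmetric diagonal choice for $g_2$ handles the twist $(1,0)$.

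I do not foresee a genuine obstacle; every ingredient is either a direct appeal to Lemma~\ref{l:along_pos_axes} or the standard fact that maximal-rank loci are open. The only mildly subtle point is the use of the irreducibility of $V$ to decouple the three non-emptiness verifications, so that one never has to construct a single $g$ simultaneously satisfying surjectivity and both injectivity conditions.
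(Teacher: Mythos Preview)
Your argument is correct and matches the paper's approach: both reduce the natural-cohomology claim to Lemma~\ref{l:along_pos_axes}, observe that the injectivity conditions are open via maximal minors, and verify non-emptiness by the diagonal example $g_1 = l\cdot I_{r\gamma}$ on one factor (with the symmetric choice on the other). The only difference is cosmetic: you make explicit the use of irreducibility of $V$ to decouple the three open conditions, whereas the paper leaves that step implicit.
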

\begin{proof}
Once we prove $V^\llcorner$ is non empty then the last statement of the corollary follows from lemma \ref{l:along_pos_axes}. The condition that $H^0(g(1,0))$ and $H^0(g(0,1))$ are injective is equivalent to the maps having maximal rank which is an open condition; it is given by nonvanishing of maximal minors of $H^0(g(1,0))$ and $H^0(g(0,1))$. Thus we need to simply show existence of some $g \in V$ with $H^0(g(1,0))$ and $H^0(g(0,1))$ injective.

By symmetry it suffices to consider only $H^0(g(0,1))$ and for this we simply need to consider $\cO(-1)^{r \gamma} \xrightarrow{g_1} \cO^{r \gamma}$ on $\P^1$. Let $I_{r \gamma}$ be the $r\gamma \x r\gamma$ identity matrix. Let $l\in H^0(\cO(1))$. The particular example $g_1 = l I_{r \gamma}$ has $H^0(g_1(1))$ injective hence the corollary. 
\end{proof}

Let $V^{(1,1)}\subset V^0$ be the subset of maps $g$ such that $H^0(g(1,1))$ is surjective.
 
\begin{lemma}\label{l:(1,1)_easy_case}
Assume $r\gamma$ is even. Then subset $V^{(1,1)}$ is nonempty and open. 
\end{lemma}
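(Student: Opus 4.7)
Openness of $V^{(1,1)}$ is essentially immediate: surjectivity of $H^0(g(1,1))$ is the non-vanishing of some maximal minor of a linear map whose matrix depends linearly on $g$, hence an open condition on $V$. Combined with the openness of $V^0 \subset V$ established earlier, this gives openness of $V^{(1,1)} \subset V^0$.

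For non-emptiness, I will exploit the irreducibility of $V = \A^{4r^2\gamma}$: any two non-empty open subsets of $V$ must intersect, so since $V^0$ is non-empty open, it suffices to exhibit a single $g \in V$ (not required to lie in $V^0$) with $H^0(g(1,1))$ surjective. After twisting by $(1,1)$, the map on global sections is a $\C$-linear map
\[
\Phi: W_x^{r\gamma} \oplus W_y^{r\gamma} \oplus \C^{r(1-\gamma)} \longrightarrow (W_x \otimes W_y)^{r\gamma},
\]
where $W_x := H^0(\cO(1,0))$ and $W_y := H^0(\cO(0,1))$, and the three summands correspond to the $g_1, g_2, g_3$ components of $g$. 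The dimension of the domain, $3r\gamma + r$, is at least that of the target, $4r\gamma$, precisely because $\gamma \leq 1$.

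Using the evenness hypothesis, I write $r\gamma = 2m$ and take $g_3 = 0$, with both the matrix $A$ for $g_1$ (entries in $W_y$) and the matrix $B$ for $g_2$ (entries in $W_x$) block-diagonal with $m$ identical $2 \times 2$ blocks
\[
A_0 = \begin{pmatrix} y_0 & y_1 \\ y_1 & y_0 \end{pmatrix}, \qquad B_0 = \begin{pmatrix} x_0 & 0 \\ 0 & x_1 \end{pmatrix},
\]
where $x_0,x_1$ and $y_0,y_1$ are the standard bases of $W_x$ and $W_y$. With this structure, $\Phi$ decomposes as a direct sum of $m$ identical block maps between $8$-dimensional spaces, so surjectivity of $\Phi$ reduces to showing a single explicit $8 \times 8$ block matrix is invertible, which is routine linear algebra.

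The main obstacle is selecting the right block pair $A_0, B_0$: naive diagonal choices leave the combined image of $g_1,g_2$ a proper subspace of the target, which (with $g_3 = 0$) kills surjectivity. The evenness of $r\gamma$ is precisely what allows the block-diagonal tiling; for $\gamma < 1$ the $g_3$ component affords additional flexibility but is not needed here.
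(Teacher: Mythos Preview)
Your argument is correct and follows essentially the same strategy as the paper: reduce non-emptiness to exhibiting one explicit $g$ with $H^0(g(1,1))$ surjective, then use the evenness of $r\gamma$ to tile $g$ with $2\times 2$ blocks so that the verification reduces to a single small linear-algebra check. The specific block choice differs: the paper splits the target $\cO^{r\gamma}$ into two halves and routes $g_1$ to one half and $g_2$ to the other via rectangular $\tfrac{r\gamma}{2}\times r\gamma$ blocks $(w_0\ w_1)$ and $(z_0\ z_1)$, so that surjectivity follows immediately from the multiplication isomorphism $H^0(\cO(1,0))\otimes H^0(\cO(0,1))\cong H^0(\cO(1,1))$ rather than from an $8\times 8$ determinant.
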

\begin{proof}
As in the proof of corollary \ref{c:V_llcorner non empty} we simply need find a $g$ in $V = \hom(F_1,F_2)$ that is surjective becuase then $V^{(1,1)}$ will be the intersection of two nonempty open subset.  We again take a decomposition $g = \left(\begin{smallmatrix}g_1 & g_2 & g_3\end{smallmatrix}\right)$ and further decompose $g$ as
\[
g = \left(
\begin{array}{ccc}
g_{1,1} & 0 & g_{3,1}\\
0 & g_{2,2} & g_{3,2}
\end{array}
\right)
\] where $g_{1,1}$ and $g_{2,2}$ are $\frac{r \gamma}{2} \x r \gamma$ matrices
\[
g_{1,1} = \left(\begin{array}{ccccc}
w_0 & w_1 &  &  & \\
 &  & \ddots &  & \\
 &  &  & w_0 & w_1
\end{array}\right), \ \  g_{2,2} = \left(\begin{array}{ccccc}
z_0 & z_1 &  &  & \\
 &  & \ddots &  & \\
 &  &  & z_0 & z_1
\end{array}\right) 
\]  and $g_{3,1},g_{3,2}$ can be arbitrary. Then already $H^0(g_1(1,1))$ and $H^0(g_2(1,1))$ are enough to give a surjection onto $H^0(\cO(1,1)^{r \gamma})$.
\end{proof}
\begin{lemma}
For any $g \in V^{(1,1)}$ the bundle $E = \ker g$ has natural cohomology for at $(1,1)$.
\end{lemma}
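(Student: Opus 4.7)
The plan is to extract everything from the long exact sequence in cohomology obtained by twisting the defining sequence of $E$ by $\cO(1,1)$. First I would twist equation \eqref{eq:gamma_less_than_1} to obtain
\[
0 \to E(1,1) \to \cO(1,0)^{r\gamma} \oplus \cO(0,1)^{r\gamma} \oplus \cO^{r(1-\gamma)} \xrightarrow{g(1,1)} \cO(1,1)^{r\gamma} \to 0,
\]
and note that the middle and right terms are direct sums of line bundles of non-negative bidegree on $\P^1 \x \P^1$, hence have vanishing $H^1$ and $H^2$.

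The long exact sequence in cohomology then collapses to
\[
0 \to H^0(E(1,1)) \to H^0(F_1(1,1)) \xrightarrow{H^0(g(1,1))} H^0(F_2(1,1)) \to H^1(E(1,1)) \to 0,
\]
together with $H^2(E(1,1)) = 0$ (since it is sandwiched between $H^1(F_2(1,1))=0$ and $H^2(F_1(1,1))=0$). By definition $g \in V^{(1,1)}$ means precisely that $H^0(g(1,1))$ is surjective, so $H^1(E(1,1))=0$ as well. Thus only $H^0(E(1,1))$ can be nonzero, which is exactly what is meant by natural cohomology at $(1,1)$.

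There is essentially no obstacle here: the argument is pure diagram chase once one recognizes that every line bundle appearing after the twist by $\cO(1,1)$ is globally generated with no higher cohomology. As a sanity check one can compute $\chi(E(1,1)) = r(1-\gamma) \geq 0$, and the above forces $\dim H^0(E(1,1)) = r(1-\gamma)$, consistent with the Hilbert polynomial.
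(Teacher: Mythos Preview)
Your proof is correct and is exactly the argument the paper intends: the paper's proof is the single sentence ``This follows from the long exact sequence in cohomology associated to equation \eqref{eq:gamma_less_than_1},'' and you have simply spelled out that long exact sequence and the vanishing of higher cohomology of the twisted line bundles. Nothing more is needed.
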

 \begin{proof}
 This follows from the long exact sequence in cohomology associated to equation \ref{eq:gamma_less_than_1}.
 \end{proof}

Now we proceed towards step 3 in the outline sketched before lemma \ref{l:along_pos_axes}. Let $p:\P^1 \x \P^1 \to \P^1$ be the first projection and $q :\P^1 \x \P^1 \to \P^1$ the second projection.

The following is proved in \cite{Solis2017}
\begin{lemma}[Kunneth Formula]\label{l:kunneth}
Let $E$ be a vector bundle on $\P^1 \x \P^1$. Then
\begin{align*}
H^0(\P^1 \x \P^1,E) &= H^0(\P^1,p_*E)\\
 H^1(\P^1 \x \P^1,E) &=  H^1(\P^1,p_*E) \oplus H^0(\P^1,R^1p_*E)\\
 H^2(\P^1 \x \P^1,E) &= H^1(\P^1,R^1p_*E)
\end{align*}
and the same is true for pushforward along $q$.
\end{lemma}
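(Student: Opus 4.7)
The plan is to apply the Leray (or Grothendieck) spectral sequence
$$E_2^{i,j} = H^i(\P^1, R^j p_* E) \;\Longrightarrow\; H^{i+j}(\P^1 \x \P^1, E)$$
for the projection $p$ and show that it collapses at $E_2$ for purely dimensional reasons.

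First I would establish vanishing in both the rows and columns of the $E_2$ page. For the rows, every fiber of $p$ is a copy of $\P^1$, so cohomology along the fibers vanishes in degrees $\geq 2$; by the theorem on cohomology and base change (or simply the fact that coherent cohomology on $\P^1$ is concentrated in degrees $0$ and $1$), this gives $R^j p_* E = 0$ for $j \geq 2$. For the columns, the base is again $\P^1$, so $H^i(\P^1, \mathcal{F}) = 0$ for $i \geq 2$ and any coherent sheaf $\mathcal{F}$. Hence the only possibly nonzero entries on the $E_2$ page sit at $(i,j) \in \{(0,0),(1,0),(0,1),(1,1)\}$.

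Next I would observe that all higher differentials are forced to be zero. The $d_2$ differential sends $E_2^{i,j}$ to $E_2^{i+2,j-1}$, and every target lies in a column with index $\geq 2$, hence is zero. All subsequent differentials shift even further to the right and so also vanish. Therefore $E_2 = E_\infty$, and the abutment filtration on $H^n(\P^1 \x \P^1, E)$ has successive quotients exactly the entries on the anti-diagonal $i+j=n$. This immediately gives
$$H^0(\P^1 \x \P^1, E) = H^0(\P^1, p_*E), \qquad H^2(\P^1 \x \P^1, E) = H^1(\P^1, R^1 p_*E),$$
and a short exact sequence
$$0 \to H^1(\P^1, p_*E) \to H^1(\P^1 \x \P^1, E) \to H^0(\P^1, R^1 p_*E) \to 0.$$

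Finally, I would note that all groups involved are finite-dimensional $\C$-vector spaces, so this short exact sequence splits, yielding the claimed direct sum decomposition of $H^1$. The statement for $q$ follows from the identical argument, or by symmetry under swapping the two factors. There is no real obstacle here beyond verifying that the spectral sequence applies to vector bundles on smooth projective varieties, which is standard; the only mild point is remembering that the direct sum in $H^1$ is as abstract vector spaces (a splitting of the Leray filtration) rather than canonical.
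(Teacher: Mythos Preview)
Your argument via the Leray spectral sequence is correct and is exactly the standard proof. Note that the paper does not actually prove this lemma here; it simply cites \cite{Solis2017}, so there is no in-paper proof to compare against, but your approach is precisely what one expects that citation to contain.
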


\begin{prop}\label{p:sign_flip}
Suppose $E$ is a rank $r$ vector bundle with $\chi(E(x,y)) = r x y - r \gamma$ and $0<\gamma\leq 1$. Assume also  $E$ has natural cohomology at $(0,n)$ and $(1,n)$ where $n\geq 1$. Then $R^1p_*E(0,n) = 0$ and $p_*E(0,n)$ is a vector bundle with natural cohomology. The same statement holds for $R q_*E(n,0)$ if we know $E$ has natural cohomology at $(n,0)$ and $(n,1)$.
\end{prop}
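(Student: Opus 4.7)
The plan is to translate the natural-cohomology assumptions at $(0,n)$ and $(1,n)$ into vanishing and splitting properties of $p_*E(0,n)$, using the Kunneth formula (Lemma \ref{l:kunneth}) together with the projection formula $\cO(1,0)=p^*\cO(1)$.

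First I would pin down the degree in which the cohomology lives. Since $\chi(E(0,n))=-r\gamma<0$ while $\chi(E(1,n))=r(n-\gamma)\geq 0$ (because $n\geq 1\geq \gamma$), the natural-cohomology hypothesis forces
\[
H^0(E(0,n))=H^2(E(0,n))=0 \quad \text{and}\quad  H^1(E(1,n))=H^2(E(1,n))=0.
\]

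Next I would deduce $R^1p_*E(0,n)=0$. Lemma \ref{l:kunneth} applied to $E(1,n)$ shows $H^0(R^1p_*E(1,n))\subset H^1(E(1,n))=0$ and $H^1(R^1p_*E(1,n))=H^2(E(1,n))=0$, so $R^1p_*E(1,n)=0$ as a sheaf on $\P^1$. The projection formula then rewrites this as $R^1p_*E(0,n)\otimes \cO(1)=0$, hence $R^1p_*E(0,n)=0$.

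With the higher direct image killed, cohomology-and-base-change for the flat projective family $p$ gives that $p_*E(0,n)$ is locally free on $\P^1$ and splits as $\bigoplus_i \cO(a_i)$. The remaining Kunneth identifications yield $H^0(p_*E(0,n))=H^0(E(0,n))=0$, forcing $a_i\leq -1$, and $H^1(p_*E(0,n)\otimes \cO(1))=H^1(E(1,n))=0$ (again via the projection formula and $R^1p_*E(0,n)=0$), forcing $a_i\geq -2$. Hence $p_*E(0,n)\cong \cO(-1)^a\oplus \cO(-2)^b$, which manifestly has natural cohomology on $\P^1$. The statement about $Rq_*E(n,0)$ follows by swapping the two $\P^1$ factors.

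The only non-formal input is the implication ``$R^1p_*E(0,n)=0$ implies $p_*E(0,n)$ locally free'', which is cohomology-and-base-change for the smooth projective morphism $p$; this is the main technical step but is essentially standard. Everything else is bookkeeping on the two Kunneth decompositions at $(0,n)$ and $(1,n)$.
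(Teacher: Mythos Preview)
Your overall strategy matches the paper's, but there is a real gap in the second step. From $H^0(R^1p_*E(1,n))=0$ and $H^1(R^1p_*E(1,n))=0$ you conclude that $R^1p_*E(1,n)=0$ as a sheaf on $\P^1$. This inference is false in general: a coherent sheaf on $\P^1$ with vanishing $H^0$ and $H^1$ need not be zero, the standard counterexample being $\cO(-1)^s$. Nothing you have written rules out $R^1p_*E(1,n)\cong \cO(-1)^s$, and hence $R^1p_*E(0,n)\cong \cO(-2)^s$, which would contribute to $H^2(E(0,n))$.

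The fix is exactly what the paper does: you must bring in the information at the twist $(0,n)$ as well. From $H^2(E(0,n))=0$ and Lemma~\ref{l:kunneth} you get $H^1(R^1p_*E(0,n))=0$. Now argue on $F^1:=R^1p_*E(0,n)$ directly: if $F^1$ had torsion it would contribute to $H^0(F^1(1))\subset H^1(E(1,n))=0$, so $F^1$ is locally free; since $F^1(1)$ has no cohomology it would have to be $\cO(-1)^s$, but then $H^1(F^1)\neq 0$, contradicting $H^2(E(0,n))=0$. Hence $F^1=0$. Once you have this, your remaining steps (base change to get $p_*E(0,n)$ locally free, then reading off the splitting type from the two vanishing conditions) are fine and agree with the paper's conclusion.
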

\begin{proof}
The argument for $p$ and $q$ is the same so we prove the statement only for $p$. 

The assumption of natural cohomology at $(0,n)$ and $(1,n)$ means $E(0,n)$ has only $H^1$ and $E(1,n)$ has only $H^0$.   Comparing with lemma \ref{l:kunneth} this means $R^1p_*E(1,n)$ has no cohomology and $R^1p_*E(0,1)$ has only $H^0$ and $p_*E(0,n)$ has only $H^1$.

First we show $Rp_*E(0,n)$ is torsion free. The projection formula says
\[
Rp_*E(m,n) = Rp_*\bigg(E(0,n) \ox p^*\cO(m) \bigg) \cong \bigg(Rp_*E(0,n) \bigg) \ox \cO(m).
\]
Set $F^\bullet=Rp_*E(0,n)$. Then rephrasing we have
\begin{itemize}
\item[(a)] $H^0(E(0,n)) = H^0(F^0) = 0$ and $H^2(E(0,n))=H^1(F^1)=0$
\item[(b)] $H^1(E(1,n)) = H^1(F^0(1)) \oplus H^0(F^1(1)) = 0$ and $H^2(E(1,n))= H^1(F^1(1))=0$.
\end{itemize}
Suppose $F^0$ had torsion. Then $H^0(F^0(m)) \subset H^0(E(m,n)$ so $H^0(E(m,n))$ would always be nonzero; this contradicts that $H^0(E(0,n)) = 0$. On the other hand if $F^1$ had torsion then $H^0(F^1(m)) \subset H^1(E(m,n))$ so $H^1(E(m,n))$ would always be nonzero which contradicts that $H^1(E(1,n)) = 0$. Thus $F^\bullet$ is torsion free.

If $F^1 \neq 0$ then, since it is locally free, we must have $F^1(1) = \cO(-1)^s$, $s\neq 0$. But then $F^1$ would have nonzero $H^1$ which is a contradiction. Therefore $F = F^0 = p_*E(0,1)$.

We have established that $F$ is a vector bundle such that $F$ has only $H^1$ and $F(1)$ has only $H^0$. This is only possible if $F$ is a direct sum of $\cO(-2)$s and $\cO(-1)$s; the result is proved.
\end{proof}

Set $V^+ = V^{(1,1)} \cap V^\llcorner$; it is a nonempty Zariski open subset of $V^0$.
\begin{prop}\label{p:everything_except_negative}
Suppose $g \in V^+$ and $E = \ker g$. Then $E$ has natural cohomology at all twists in the set $\{(a,b)| a\geq 0 \mbox{ or } b \geq 0\}$.
\end{prop}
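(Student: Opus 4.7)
The plan is to bootstrap by applying Proposition \ref{p:sign_flip} iteratively.

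Corollary \ref{c:V_llcorner non empty} already gives natural cohomology at every $(n,0)$ and $(0,n)$ with $n \geq 0$, and since $g \in V^{(1,1)}$, $E$ has natural cohomology at $(1,1)$.

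First I apply Proposition \ref{p:sign_flip} with the first projection $p$ at $n=1$: its hypotheses $(0,1)$ and $(1,1)$ are both satisfied. The conclusion, combined with the projection formula, propagates natural cohomology to every $(m,1)$ with $m \in \Z$. Symmetrically, applying the proposition with $q$ at $n=1$ yields natural cohomology at every $(1,m)$.

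Second I iterate: for each $b \geq 2$, Proposition \ref{p:sign_flip} for $p$ at $n=b$ applies with hypotheses $(0,b)$ (from $V^\llcorner$) and $(1,b)$ (from the previous step via $q$), yielding natural cohomology at every $(m,b)$; analogously for $a \geq 2$ I obtain natural cohomology at every $(a,m)$.

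Together with the positive axes from $V^\llcorner$, these steps cover every point of $\{(a,b) : a \geq 0 \text{ or } b \geq 0\}$ except the negative portions of the axes, namely $(a,0)$ with $a<0$ and $(0,b)$ with $b<0$. These remaining cases fall outside the literal scope of Proposition \ref{p:sign_flip} (whose hypothesis requires $n \geq 1$), so I handle them by computing $Rp_*E$ and $Rq_*E$ directly from the monad $0 \to E \to F_1 \to F_2 \to 0$: each pushforward reduces, by the cohomology of $\cO(a,b)$ on the second factor, to a two-term complex $\cO(-1)^{r\gamma} \to \cO^{r\gamma}$ on $\P^1$ whose map is $p_*g_2$ (resp.\ $q_*g_1$). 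Provided this induced map is injective as a sheaf map, its cokernel is a torsion sheaf of length $r\gamma$, so the pushforward sits in degree $1$ as a torsion sheaf, and the projection formula yields natural cohomology (with only $H^1$) at every $(m,0)$ and $(0,m)$.

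The hard part will be the negative portions of the axes: Proposition \ref{p:sign_flip} does not reach $n=0$, and the direct monad computation requires an additional open condition on $g$ (injectivity of $p_*g_2$ and $q_*g_1$ as sheaf maps on $\P^1$) beyond what the literal definition of $V^+$ gives; this open condition still yields a nonempty open subvariety of $V^0$, which is what is needed.
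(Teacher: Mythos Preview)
Your bootstrap via Proposition~\ref{p:sign_flip} is exactly the paper's argument and correctly yields natural cohomology at every $(a,b)$ with $a\geq 1$ or $b\geq 1$, together with the nonnegative axes from $V^\llcorner$.

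You are also right that the negative portions of the axes, $(a,0)$ with $a<0$ and $(0,b)$ with $b<0$, are not reached by iterating Proposition~\ref{p:sign_flip} (which needs $n\geq 1$). The paper's own proof in fact glosses over these points: it simply asserts that repeating the argument with $p_*E(0,n)$ and $q_*E(n,0)$ for $n\geq 1$ covers all of $\{(a,b): a\geq 0 \text{ or } b\geq 0\}$, without treating the negative axes separately. Your direct computation of $Rp_*E$ and $Rq_*E$ from the short exact sequence is a sound way to fill this in, but as you observe it requires that $p_*g_2$ and $q_*g_1$ be injective as sheaf maps on $\P^1$, a condition not implied by the literal definition of $V^+$. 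So strictly speaking neither your argument nor the paper's establishes the proposition for \emph{every} $g\in V^+$ at those twists.

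This is harmless for the eventual application: in Corollary~\ref{c:gamma_leq_1} one intersects with $V^-$ and invokes Proposition~\ref{p:serre_dual}(a), and $E^\star$ having natural cohomology at $(n,0)$ and $(0,n)$ for $n\geq 0$ is, by Serre duality, exactly $E$ having natural cohomology at $(-n,0)$ and $(0,-n)$. So the negative axes are covered once both open conditions are imposed, and you need not introduce a further one.
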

\begin{proof}
By the properties of the set $V^+$ we have that $E$ has natural cohomology along $(0,n),(n,0)$ and $(1,1)$.

We apply proposition \ref{p:sign_flip} with $n=1$. This shows that $p_*E(0,1)$ and $q_*E(1,0)$ are vector bundles with natural cohomology. In particular, $(p_*E(0,1)) \ox \cO(m) \cong Rp_*E(m,1)$ has natural cohomology so by lemma \ref{l:kunneth} we see that $E$ has natural cohomology along $(m,1)$ for any $m \in \Z$. Similarly, using $q_*E(1,0)$ we get natural cohomology along $(1,m)$.

We can now apply proposition \ref{p:sign_flip} for any $n\geq 1$. Repeating the argument of the previous paragraph with $p_*E(0,n),q_*E(n,0)$ we can establish $E$ has natural cohomology at any $(a,b)$ provided $a\geq 0$ or $b\geq 0$. 
\end{proof}

Set $E^\star = E^\vee(\omega) = E^\vee(-2,-2)$ then $H^i(E(a,b)) \cong H^{2-i}(E^\star(-a,-b))$. Thus we consider the Serre dual of equation \eqref{eq:gamma_less_than_1}:
\begin{equation}\label{eq:dual_gamma_less_than_1}
0\to \cO^{r \gamma}(-2,-2) \xrightarrow{g^\star = g^\vee(\omega)}  \begin{array}{c}
\cO^{r \gamma}(-1,-2) \\
\cO^{r \gamma}(-2,-1)\\
\cO^{r-r \gamma}(-1,-1)
\end{array} \to E^\star \to 0
\end{equation}
\begin{lemma}\label{l:tautology_(1,1)}
If $g \in V^0$ and $E^\star = \cok g^\star$ then $E^\star$ has natural cohomology at $(1,1)$.
\end{lemma}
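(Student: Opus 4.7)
The plan is a direct cohomology computation: twist the Serre dual sequence \eqref{eq:dual_gamma_less_than_1} by $(1,1)$ and read off the cohomology of the outer terms.

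First, I would observe that since $g$ is surjective with locally free kernel $E$, the sequence \eqref{eq:gamma_less_than_1} is a short exact sequence of vector bundles, so dualizing and twisting by $\omega = \cO(-2,-2)$ preserves exactness and gives \eqref{eq:dual_gamma_less_than_1}. Twisting that sequence by $(1,1)$ yields
\[
0 \to \cO(-1,-1)^{r\gamma} \to
\begin{array}{c}
\cO(0,-1)^{r\gamma} \\
\cO(-1,0)^{r\gamma} \\
\cO(0,0)^{r-r\gamma}
\end{array}
\to E^\star(1,1) \to 0.
\]

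Next I would compute the cohomology of the two outer sheaves. The leftmost term $\cO(-1,-1)^{r\gamma}$ has no cohomology at all on $\P^1 \x \P^1$ by the Künneth formula (lemma \ref{l:kunneth}), since $H^*(\P^1, \cO(-1)) = 0$. For the middle term, each summand $\cO(0,-1)$ and $\cO(-1,0)$ again has vanishing cohomology by Künneth, while $\cO$ has only $H^0$ (of dimension $1$). Thus the middle bundle has cohomology concentrated in degree $0$, of dimension $r - r\gamma$.

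Finally, the long exact sequence in cohomology associated to the twisted short exact sequence gives $H^i(E^\star(1,1)) \cong H^i$ of the middle bundle for each $i$. Therefore $H^1(E^\star(1,1)) = H^2(E^\star(1,1)) = 0$ and only $H^0(E^\star(1,1))$ can be nonzero, so $E^\star$ has natural cohomology at $(1,1)$. There is no real obstacle here — as the name of the lemma suggests, the conclusion is immediate once the twisted sequence is written down, and the result holds for every $g \in V^0$ with no genericity hypothesis.
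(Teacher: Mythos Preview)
Your argument is correct and is exactly what the paper's one-line proof (``Follows immediately from the long exact sequence in cohomology'') is pointing to: after twisting \eqref{eq:dual_gamma_less_than_1} by $(1,1)$ the left term is acyclic and the middle term has only $H^0$, so the long exact sequence gives natural cohomology at $(1,1)$. You have simply spelled out the details the paper leaves implicit.
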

\begin{proof}
Follows immediately from the long exact sequence in cohomology. 
\end{proof}

Let $V^- \subset V^0$ be the subset of maps $g$ such that $H^1(g^\star(0,2))$ and ,$H^1(g^\star(2,0))$ are injective. 
\begin{lemma}\label{l:V- - nonemtpy}
The subset of $V^-$ is nonempty and Zariski open. 
\end{lemma}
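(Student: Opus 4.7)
The plan is to establish openness and non-emptiness separately, then conclude. For openness, both conditions ``$H^1(g^\star(0,2))$ injective'' and ``$H^1(g^\star(2,0))$ injective'' are rank conditions on linear maps between fixed finite-dimensional vector spaces whose matrix entries depend polynomially on $g \in V$. Hence each condition cuts out a Zariski open subset of $V$, and intersecting with the Zariski open set $V^0$ gives $V^-$ open.

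For non-emptiness, I will use the block decomposition $g = \left(\begin{smallmatrix}g_1 & g_2 & g_3\end{smallmatrix}\right)$ from the proof of lemma \ref{l:along_pos_axes} and reduce each injectivity condition to a condition depending on only a single block. Applying lemma \ref{l:kunneth} to the target of $g^\star(0,2)$, namely $\cO^{r\gamma}(-1,0) \oplus \cO^{r\gamma}(-2,1) \oplus \cO^{r-r\gamma}(-1,1)$, only the middle summand has nonzero $H^1$, so $H^1(g^\star(0,2)) = H^1(g_1^\star(0,2))$, where $g_1^\star$ is the component landing in $\cO^{r\gamma}(-2,-1)$. Since $g_1$ pulls back along $q$ from a map $\tilde g_1 : \cO(-1)^{r\gamma} \to \cO^{r\gamma}$ on the second $\P^1$, a further application of Kunneth identifies $H^1(g_1^\star(0,2))$ with $H^0(\tilde g_1^\vee) : \C^{r\gamma} \to H^0(\cO(1))^{r\gamma} \cong \C^{2r\gamma}$, up to tensoring with the 1-dimensional space $H^1(\cO(-2))$. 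Symmetrically, injectivity of $H^1(g^\star(2,0))$ reduces to injectivity of $H^0(\tilde g_2^\vee)$ for $\tilde g_2$ the corresponding map on the first $\P^1$.

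To exhibit a witness, take $\tilde g_1 = z_0 I_{r\gamma}$ for a nonzero $z_0 \in H^0(\cO(1))$; then $H^0(\tilde g_1^\vee)$ sends $e_i \mapsto z_0 e_i$, and these $r\gamma$ vectors are linearly independent in $\C^{2r\gamma}$. Take $\tilde g_2 = w_0 I_{r\gamma}$ analogously and let $g_3$ be arbitrary. This produces a $g \in V$ satisfying both injectivity conditions. Since both the injectivity locus and $V^0$ are non-empty Zariski open subsets of the irreducible affine space $V$, they intersect, so $V^-$ is non-empty. The only step that needs real care is the Kunneth identification in the second paragraph; this is routine bookkeeping given the vanishing pattern of the cohomology of line bundles on $\P^1 \x \P^1$, but it is the key structural observation that makes the injectivity conditions depend only on the individual blocks $g_1$ and $g_2$.
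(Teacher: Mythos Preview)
Your proof is correct and follows essentially the same approach as the paper: both arguments observe that openness is a maximal-rank condition, reduce non-emptiness via K\"unneth to the single surviving block $g_1^\star$ (resp.\ $g_2^\star$) landing in $\cO(-2,1)^{r\gamma}$ (resp.\ $\cO(1,-2)^{r\gamma}$), identify the resulting $H^1$-map with $H^0$ of a map $\cO^{r\gamma}\to\cO(1)^{r\gamma}$ on a single $\P^1$, and exhibit the witness $lI_{r\gamma}$. Your write-up is in fact slightly more careful than the paper's in spelling out why the other blocks vanish under $H^1$ and in noting explicitly that the injectivity locus and $V^0$ are non-empty opens in the irreducible space $V$ and hence intersect.
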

\begin{proof}
As in corollary \ref{c:V_llcorner non empty} it suffices to give examples of $g$ such that $H^0(g^\star(0,2))$ and $H^0(g^\star(2,0))$ are injective. For example twisting equation \ref{eq:dual_gamma_less_than_1} by $(0,2)$ we see only part that survives after applying $H^1$ is the component $\cO^{r\gamma}(-2,0) \to \cO^{r \gamma}(-2,1)$. Moreover under the Kunneth decomposition $H^1(\cO(-2,n)) \cong H^1(\cO(-2)) \ox H^0(\cO(n))$ the map $H^1(g^\star(0,2))$ is determined by applying $H^0$ to a map $\cO^{r\gamma} \to \cO^{r\gamma}(1)$. Let $l\in H^0(\cO(1))$ then the map $l I_{r \gamma}\colon \cO^{r\gamma} \to \cO^{r\gamma}(1)$ is injective after applying $H^0$. 

A symmetric argument applies to show the existence of a $g$ with $H^0(g^\star(2,0))$ injective. 
\end{proof}

The rest of the argument is more or less Serre dual the argument we have given thus far and is given in the following
\begin{prop}\label{p:serre_dual}
Let $g \in V^-$ and set $E = \ker g$.
\begin{itemize}
\item[(a)] $E$ has natural cohomology along $(n,0)$ and $(0,n)$ for $n\geq 0$.
\item[(b)] $E$ has natural cohomology at $(1,1)$
\item[(c)] $E$ has natural cohomology everywhere except possibly the positive quadrant.  
\end{itemize}
\end{prop}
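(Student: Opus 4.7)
The plan is to mirror, on the Serre dual side, the arguments of Corollary \ref{c:V_llcorner non empty}, Lemma \ref{l:(1,1)_easy_case}, and Proposition \ref{p:everything_except_negative}, using the Serre dual sequence \eqref{eq:dual_gamma_less_than_1} in place of \eqref{eq:gamma_less_than_1}. The condition defining $V^-$ is the Serre-dual analog of the $V^\llcorner$ condition: injectivity of $H^1(g^\star(0,2))$ is equivalent by Serre duality to surjectivity of $H^1(g(0,-2))$, the map whose surjectivity controls natural cohomology of $E$ at $(0,-2)$ in the long exact sequence of \eqref{eq:gamma_less_than_1}.

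For part (a) I would first handle the base cases (the twists adjacent to the origin) directly from the long exact sequence: the middle bundle in \eqref{eq:gamma_less_than_1} or \eqref{eq:dual_gamma_less_than_1} has cohomology concentrated enough at these twists that natural cohomology is automatic. For the higher twists, natural cohomology of $E$ reduces via the long exact sequence to surjectivity of a single map — namely $H^1(g(0,-n))$ (equivalently $H^1(g^\star(0,n))$) for $n \geq 2$ — since only one summand of the middle bundle ($g_1$ in the notation of Lemma \ref{l:along_pos_axes}) contributes to this cohomology. The hypothesis $V^-$ supplies the base case $n=2$, and I would propagate to all $n \geq 2$ by the same pullback-plus-tensor argument as in Lemma \ref{l:along_pos_axes}: the relevant component is pulled back from a map on $\P^1$, and after Serre dualizing on $\P^1$ the map $H^0(g_1^\vee(n-2))$ decomposes as an iterated tensor of the $n=2$ case, so injectivity at the base propagates. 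The other axis is symmetric.

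For part (b), the required natural cohomology at $(1,1)$ (respectively its Serre dual $(-1,-1)$) follows immediately from the long exact sequence of \eqref{eq:dual_gamma_less_than_1} at that twist, where the source and middle bundles have cohomology concentrated in a single degree; this is the $V^0$-automatic statement analogous to Lemma \ref{l:tautology_(1,1)}. For part (c), once parts (a) and (b) are in hand, I would invoke Proposition \ref{p:sign_flip} — whose Kunneth-based proof applies equally well to $E^\star$, or equivalently to $E$ under the involution $(a,b) \mapsto (-a,-b)$ — and then repeat the argument of Proposition \ref{p:everything_except_negative} to extend natural cohomology to all twists outside the positive quadrant.

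The main obstacle is the propagation step (the Serre dual of Lemma \ref{l:along_pos_axes}): one must identify the cohomologically nontrivial component of $g^\star$ (or equivalently $g$ at negative twists), verify it is pulled back from $\P^1$, and replay the iterated tensor-decomposition argument. Beyond this, no essentially new ingredients are required, and the rest of the proof is a mechanical mirror of the preceding arguments.
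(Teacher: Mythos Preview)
Your proposal is correct and follows essentially the same route as the paper's proof. Both handle the small twists $(0,0),(1,0),(0,1)$ of $E^\star$ automatically from the long exact sequence, propagate along the axes for $n\ge 2$ by isolating the single surviving component of $g^\star$ and rerunning the K\"unneth/tensor-decomposition argument of Lemma~\ref{l:along_pos_axes}, invoke Lemma~\ref{l:tautology_(1,1)} for part~(b), and then repeat the proof of Proposition~\ref{p:everything_except_negative} (via Proposition~\ref{p:sign_flip}) for part~(c).
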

\begin{proof}
Twisting by $(0,0),(1,0),(0,1)$ yields a short exact sequence where either the source or target of $g^\star$ has no cohomology so $E^\star$ will always have natural cohomology at these twists. When $n\geq 2$ then part (a) is proved using the same argument given in \ref{l:along_pos_axes} utilizing the Kunneth decomposition
\begin{align*}
H^1(\cO^{r\gamma}(n-2,-2)) \cong \begin{array}{c}
H^0(\cO(n-2)^{r \gamma})\\
\ox\\
H^1(\cO(-2)^{r\gamma})
\end{array} \xrightarrow{H^1(g^\star(n,0))} \begin{array}{c}
H^0(\cO(n-1)^{r \gamma})\\
\ox\\
H^1(\cO(-2)^{r\gamma})
\end{array}\cong H^1(\cO^{r\gamma}(n-1,-2)).
\end{align*}
Part (b) is covered by lemma \ref{l:tautology_(1,1)} and now (c) follows as in the proof of \ref{p:everything_except_negative}.
\end{proof}  

\begin{cor}\label{c:gamma_leq_1}
Conjecture \ref{conj} holds for $p = x y - \gamma$ provided $\gamma \leq 1$.
\end{cor}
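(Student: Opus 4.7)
The plan is to assemble the pieces established in Propositions \ref{p:everything_except_negative} and \ref{p:serre_dual}, carrying out step (5) of the outline sketched at the start of Section \ref{s:gamma_leq_1}.

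First I would fix $r$ sufficiently large that $r\gamma$ is an even positive integer; writing $\gamma = p/q$ in lowest terms, any $r$ divisible by $2q$ works, and the conjecture allows the rank to be chosen arbitrarily large. With such an $r$, the space $V = \hom(F_1,F_2)$ is an affine space and $V^0 \subset V$ is a nonempty Zariski open subvariety, hence irreducible.

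Next I would combine Corollary \ref{c:V_llcorner non empty}, Lemma \ref{l:(1,1)_easy_case}, and Lemma \ref{l:V- - nonemtpy} to conclude that $V^\llcorner$, $V^{(1,1)}$, and $V^-$ are each nonempty Zariski open subsets of $V^0$. Setting $V^+ = V^{\llcorner} \cap V^{(1,1)}$, irreducibility of $V^0$ forces $V^+ \cap V^-$ to be nonempty; I would then pick any $g$ in this intersection and set $E = \ker g$.

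For this $E$, Proposition \ref{p:everything_except_negative} provides natural cohomology at every $(a,b)$ with $a \geq 0$ or $b \geq 0$, while Proposition \ref{p:serre_dual}(c) provides natural cohomology at every $(a,b)$ outside the strictly positive quadrant. The union of these loci is all of $\Z^2$, so $E$ has natural cohomology everywhere, and the identities $\chi(E(x,y)) = rxy - r\gamma = r P(x,y)$ and $\rank(E) = r$ drop out from the additivity of Euler characteristic and of ranks applied to \eqref{eq:gamma_less_than_1}. Given how much has already been proved, there is no substantive obstacle here; the only mild technicality is the even-rank requirement coming from Lemma \ref{l:(1,1)_easy_case}, which is easily absorbed into the hypothesis that $\rank(E)$ be sufficiently big.
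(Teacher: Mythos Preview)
Your proof is correct and follows essentially the same approach as the paper's: intersect the nonempty open sets $V^+$ and $V^-$, pick $g$ in the intersection, and combine Proposition~\ref{p:everything_except_negative} with Proposition~\ref{p:serre_dual}(c) to cover all of $\Z^2$. You are slightly more explicit than the paper about the even-rank hypothesis needed for Lemma~\ref{l:(1,1)_easy_case}, about why the intersection is nonempty (irreducibility of $V^0$), and about verifying the rank and Hilbert polynomial of $E$, but none of this deviates from the paper's line of argument.
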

\begin{proof}
We have two non empty Zariski open subsets $V^+$ and $V^-$. Let $g \in V^+\cap V^-$. Set $E = \ker g$ then by proposition \ref{p:everything_except_negative} $E$ has natural cohomology everywhere except the positive quadrant but by proposition \ref{p:serre_dual}(c) $E$ also has natural cohomology in the negative quadrant. The result is proved.
\end{proof}

\section{The case $\gamma >1$}
We focus now on the remaining cases of conjecture \ref{conj}: when $P = x y - \gamma$ with $\gamma >1$. We cannot expect to realize these cases from bundles that are the kernel of a generic surjection. However we can realize them via monads. 

Let $X$ be a scheme and suppose 
\begin{equation}\label{eq:ABCmonad}
0\to A \xrightarrow{f} B\xrightarrow{g} C \to 0
\end{equation}
is a complex of vector bundles on $X$ such that $f$ is an injective bundle map and $g$ is a surjective bundle map. We set $E(f,g) = \ker(g)/im(f)$; it is a vector bundle. We sometimes call $E = E(f,g)$ a monad vector bundle. 

A useful tool to study monads is the display which is the following commutative diagram with exact rows and columns:
\begin{equation}\label{eq:display}
\xymatrix{
0 \ar[r] & A \ar[r] \ar[d]^{=} & \ker(g) \ar[r] \ar[d] & E \ar[r] \ar[d] & 0\\
0 \ar[r] & A \ar[r]^f & B  \ar[r] \ar[d]^{g}  & \cok(f) \ar[d] \ar[r] & 0\\
            &              & C  \ar[r]^{=} &            C  &     
}
\end{equation}
An easy consequence of the display is that 
\[
\chi(E) = \chi(B) - \chi(A) - \chi(C)
\]

In particular, if $\gamma \in \Q$ and $r$ is chosen so that $r \gamma \in \Z$ then when our monad is
\begin{equation}\label{eq:monad_r_gamma}
0\to \cO(-1,-1)^{r(\gamma - 1)} \xrightarrow{f} \begin{array}{c}
\cO(0,-1)^{r \gamma} \\ \cO(-1,0)^{r \gamma}
\end{array} \xrightarrow{g} \cO^{r \gamma} \to 0
\end{equation}
we have $\chi(E(x,y)) = r x y - r \gamma$. 

\begin{rmk}
In comparison with the case $0<\gamma\leq 1$ we now have two maps to worry about. Instead of using the long exact sequence in cohomology we will utilize a spectral sequence to compute the cohomology of $E(f,g)$. A second complication is more significant: in the previous section we saw that the condition to have natural cohomology was expressed as a finite number of open condition in a vector space $V$. We will see soon that when $\gamma>1$ the condition to have natural cohomology is expressed as a finite number of locally closed conditions in two different vector spaces. The key to proving the result in this case is showing the variety defined by these locally closed conditions is non empty. 
\end{rmk}

Let $\mathcal{M}^\bullet$ denote a monad as in  \eqref{eq:ABCmonad}. We set $H^i(\mathcal{M}^\bullet) = H^i(A) \oplus H^i(B) \oplus H^i(C)$. Now let $X = \P^1 \x \P^1$. We say a monad $\mathcal{M}^\bullet$ has {\it natural cohomology} if for every twist $(n,m)$ we have at most one $i$ such that $H^i(\mathcal{M}^\bullet(n,m)) \neq 0$. Any monad made of out $\cO(-1,-1),\cO(-1,0),\cO(0,-1),\cO$ has natural cohomology. In particular the set of twists is partitioned into three regions:  where there is only $H^0,H^1$ or $H^2$.

Our first result is the following
\begin{prop}\label{p:spectralResults}
Suppose $\mathcal{M}^\bullet = 0\to A \xrightarrow{f} B\xrightarrow{g} C \to 0$ is a monad on $\P^1 \x \P^1$ with natural cohomology. Let $E = E(f,g)$ be the associated vector bundle. Then 
\begin{equation}\label{eq:spectralResults111}
\begin{aligned}
H^0(E(n,m)) &\cong \frac{\ker H^0(g)}{\im H^0(f)}   &  H^0(E(n,m)) &\cong \ker H^1(g)  & H^0(E(n,m)) &\cong 0 \\
H^1(E(n,m)) &\cong \cok(H^0(g)) \ \ ,\ \  &  H^1(E(n,m)) &\cong \frac{\ker H^1(g)}{\im H^1(f)}\ \  ,\ \  & H^1(E(n,m)) &\cong \ker H^2(f) \\
H^2(E(n,m)) &\cong 0  &  H^2(E(n,m)) &\cong \cok H^1(g) & H^2(E(n,m)) &\cong \frac{\ker H^2(g)}{\im H^2(f)} 
\end{aligned}
\end{equation}
where the first column applies when $H^0(\mathcal{M}^\bullet(n,m)) \neq 0$, the second when $H^1(\mathcal{M}^\bullet(n,m)) \neq 0$ and the third when $H^2(\mathcal{M}^\bullet(n,m)) \neq 0$.
\end{prop}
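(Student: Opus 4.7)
The plan is to use the hypercohomology spectral sequence of the complex $\mathcal{M}^\bullet$ with its naive filtration,
\[
E_1^{p,q} = H^q(\mathcal{M}^p) \;\Longrightarrow\; \mathbb{H}^{p+q}(\mathcal{M}^\bullet).
\]
Since $f$ is an injective bundle map and $g$ is a surjective bundle map, the only nonzero cohomology sheaf of $\mathcal{M}^\bullet$ is $\mathcal{H}^1(\mathcal{M}^\bullet)=E$, so the abutment reads $\mathbb{H}^n(\mathcal{M}^\bullet)=H^{n-1}(E)$. This identification is the mechanism by which the monad's three-term cohomology gets reorganized into the three rows $H^0(E),H^1(E),H^2(E)$.

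The natural cohomology hypothesis at the twist $(n,m)$ is precisely the statement that only a single row $q=q_0\in\{0,1,2\}$ of $E_1^{p,q}$ is nonzero. With only that one row alive, the $d_1$ differentials reduce to $H^{q_0}(f)$ and $H^{q_0}(g)$, so
\[
E_2^{0,q_0}=\ker H^{q_0}(f),\qquad E_2^{1,q_0}=\ker H^{q_0}(g)/\im H^{q_0}(f),\qquad E_2^{2,q_0}=\cok H^{q_0}(g),
\]
and every other entry vanishes. All higher differentials $d_r$ for $r\ge 2$ shift rows and therefore vanish, so the sequence collapses at $E_2$.

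Finally I would read off the abutment: on each diagonal $p+q=i+1$ only one entry can survive, since $q_0$ is fixed, so no extension problems arise and $H^i(E)\cong E_2^{\,i+1-q_0,\,q_0}$. Setting $q_0=0,1,2$ in turn recovers the three columns of \eqref{eq:spectralResults111}. The only bookkeeping point worth checking is the corner $q_0=2$: the term $E_2^{2,2}=\cok H^2(g)$ would abut to $H^3(E)$, which vanishes on the surface $\P^1\times\P^1$; this is automatic because surjectivity of $g$ as a bundle map together with $H^3(\ker g)=0$ already forces $H^2(g)$ to be surjective.

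I do not anticipate a genuine obstacle; the argument is really just a degeneration-plus-no-extensions computation. As a backup, the same identifications can be obtained without spectral sequences by applying the long exact sequence in cohomology to the two short exact sequences supplied by the display \eqref{eq:display}, namely $0\to \ker g\to B\to C\to 0$ followed by $0\to A\to \ker g\to E\to 0$, and substituting in each case the three vanishings imposed by natural cohomology on $H^*(A), H^*(B), H^*(C)$. That alternative is more elementary but longer and amounts to unwinding the spectral sequence by hand.
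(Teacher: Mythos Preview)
Your proposal is correct and is essentially the same argument as the paper's: both compute the hypercohomology spectral sequence of $\mathcal{M}^\bullet$, identify the abutment with $H^{*-1}(E)$ via the other filtration, and then read off $E_2$ from the single surviving row. The paper builds the double complex by hand (injective resolutions plus two applications of the horseshoe lemma) and runs both filtrations explicitly; you invoke the hypercohomology machinery directly, which is the standard packaging of that same construction. Your remark about the corner $q_0=2$ (that $\cok H^2(g)=0$ because $H^3(\ker g)=0$) is the right consistency check, and the symmetric check $\ker H^0(f)=0$ at $q_0=0$ follows since $f$ is an injective sheaf map.

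One small point: for $q_0=1$ your formula yields $H^0(E)\cong E_2^{0,1}=\ker H^1(f)$, which agrees with the paper's own $E_{2,\mathrm{vert}}$ diagram \eqref{eq:E2vert} but not with the displayed equation \eqref{eq:spectralResults111}, where ``$\ker H^1(g)$'' appears to be a typo for ``$\ker H^1(f)$''. So your computation is in fact the correct one.
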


\begin{proof}
We prove this result using a spectral sequence. Let $\mc{I}^{\bullet}_A, \mc{I}^{\bullet}_C, \mc{I}^{\bullet}_E$ be injective resolutions of $A,E,C$. Recall the display of a monad \eqref{eq:display}. Use the horseshoe lemma with $ \mc{I}^{\bullet}_A$ and $\mc{I}^{\bullet}_E$ we get an injective resolution $\mc{I}^{\bullet}_{\ker(g)}$ of $\ker(g)$. Now applying the horseshoe lemma again to $\mc{I}^{\bullet}_{\ker(g)}$ and $\mc{I}^{\bullet}_{C}$ to obtain an injective resolution $\mc{I}^{\bullet}_B$ of $B$.

We conclude there is a complex of sheaves of the following form
\[
\xymatrix{
\mc{I}^{\bullet}_A \ar[r] & \mc{I}^{\bullet}_B \ar[r] & \mc{I}^{\bullet}_C\\
A \ar[r]\ar[u] & B \ar[r]\ar[u]  & C . \ar[u]
}
\]
Where $\mc{I}^{\bullet}_B \cong \mc{I}^{\bullet}_A \oplus \mc{I}^{\bullet}_E \oplus \mc{I}^{\bullet}_C$.Take $\Gamma$ so we get the following double complex
\[
\xymatrix{ \vdots & \vdots & \vdots  \\
\Gamma(\mc{I}_A^1) \ar[r]\ar[u] & \Gamma(\mc{I}_B^1) \ar[r]\ar[u] & \Gamma(\mc{I}_C^1) \ar[u]\\
\Gamma(\mc{I}_A^0) \ar[r]\ar[u] & \Gamma(\mc{I}_B^0) \ar[r]\ar[u] & \Gamma(\mc{I}_C^0) \ar[u]}
\]
Here the rows are exact except in the middle where we have
\[
\frac{\ker(\Gamma(\mc{I}_B^\bullet) \to \Gamma(\mc{I}_C^\bullet))}{\im \Gamma(\mc{I}_A^\bullet)} \cong \Gamma(\mc{I}_E^\bullet).
\]
If we compute first using the horizontal arrows we then get that  
$E_{2,hor}$ is
\[
\xymatrix{ 0 & H^2(E)  &  0  \\
0 & H^1(E)  &  0\\
0 & H^0(E) \ar[uul]  &  0 \ar[uul] }
\]
This tells us that 
\begin{align*}
H^1(Tot) &\cong H^0(E)\\
H^2(Tot) &\cong H^1(E)\\
H^3(Tot) &\cong H^2(E)
\end{align*}
where $Tot$ is the total complex. Using the vertical arrows first we get $E_{1,vert}$ is
\[
\xymatrix{ 
H^2(A) \ar[r]^{H^2(f)} & H^2(B) \ar[r]^{H^2(g)}  &  H^2(C)  \\
H^1(A) \ar[r]^{H^1(f)} & H^1(B) \ar[r]^{H^1(g)} &  H^1(C)\\
H^0(A) \ar[r]^{H^0(f)} & H^0(B)  \ar[r]^{H^0(g)} &  H^0(C)  }
\]
and because the monad has natural cohomology only one of these rows is nonzero. For example the following shows $E_{2,vert}$ when the monad has only $H^0$ or $H^1$ respectively:
\begin{equation}\label{eq:E2vert}
\xymatrix{ 
0&0&0 & 0 \ar[rrd] & 0 & 0 \\
0\ar[rrd] &0&0 &\ker H^1(f) \ar[rrd]  & \frac{\ker H^1(g)}{\im H^1(f))} &  \cok H^1(g) \\
0&\frac{\ker H^0(g)}{\im H^0(f))}&\cok H^0(g) &  0 & 0  &  0 }
\end{equation}

The three different possibilities for $E_{1,vert}$ lead to the three different computations of $H^*(E)$ in the statement of the proposition. 
\end{proof}

The problem of finding a vector bundle with natural cohomology is now essentially a problem of linear algebra: find $f,g$ so that $H^i(f),H^i(g)$ have the appropriate rank. 

Here is the sketch of the argument
\begin{itemize}
\item[(1)] For a fixed choice of $f$ there is a positive dimensional family of maps $g$ such that $E = E(f,g)$ has natural cohomology at $(1,1)$ and along $(n,0),(0,n)$ for $n \geq 0$. 
\item[(2)] A generic choice of $g$ satisfying (1) will give a bundle $E$ having natural cohomology everywhere except possibly in the negative quadrant.
\item[(3)] For a fixed choice of $g$ there is a positive dimensional family of $f$ such that the Serre dual bundle $E^
\star = E(g^\star,f^\star)$ has natural cohomology at $(2,2)$ and along $(n,0),(0,n)$ for $n\geq 0$.
\item[(4)] A generic choice of $f$ satisfying (3) will give a bundle $E$ having natural cohomology everywhere except possibly in the positive quadrant.
\item[(5)] A generic choice of $f,g$ satisfying (1),(3) will have natural cohomology everywhere. 
\end{itemize}
If $F_1 \xrightarrow{\phi} F_2$ is a map of sheaves on a scheme with a dualizing sheaf $\omega$ then we set $F_i^\star = F_i^\vee(\omega)$ and $\phi^\star\colon F_2^\star \to F_1^\star$.

The twists $(1,1)$ and $(2,2)$ play a central role because they are the first twists when both maps $f,g$ `survive' at the level of cohomology. Consider the twist of equation \eqref{eq:monad_r_gamma} by $(0,n)$ with $n\geq 0$: 
\[
0\to \cO(-1,n-1)^{r(\gamma - 1)} \xrightarrow{f} \begin{array}{c}
\cO(0,n-1)^{r \gamma} \\ \cO(-1,n)^{r \gamma}
\end{array} \xrightarrow{g} \cO^{r \gamma}(0,n) \to 0
\]
then $H^*(\cO(-1,n-1)^{r(\gamma - 1)})=0$ so the cohomology of $E$ along $(0,n)$ only depends on $H^0(g(0,n))$.

Similarly if we consider the Serre dual and twist by $(2,1)$ we obtain:
\[
0\to \cO(0,-1)^{r \gamma} \xrightarrow{g^\star(2,1)} \begin{array}{c}
\cO(1,-1)^{r \gamma} \\ \cO(0,0)^{r \gamma}
\end{array} \xrightarrow{f^\star(2,1)} \cO^{r(\gamma - 1)}(1,0) \to 0
\]
and the cohomology only depends on $f^\star(2,1)$; one can also check that for any $g^\star,f^\star$ the bundle $E^\star$ will always have natural cohomology at $(1,1)$. 

To begin the argument consider the affine spaces
\begin{align*}
W =& \hom\left(\cO(-1,-1)^{r(\gamma - 1)},\begin{array}{c}
\cO(0,-1)^{r \gamma} \\ \cO(-1,0)^{r \gamma}
\end{array} \right) \cong \A^{4 (r \gamma) r(\gamma-1)}\\
V =& \hom\left(\begin{array}{c}
\cO(0,-1)^{r \gamma} \\ \cO(-1,0)^{r \gamma}
\end{array},\cO^{r \gamma} \right) \cong \A^{4(r \gamma)^2}
\end{align*}
As before, let $V^0$ denote the set of maps which are surjective as maps of vector bundles. Similarly $W^0$ is the set of maps which are injective as maps of vector bundles; equivalently it is the set of maps such that the dual morphism is surjective. 

If we twist equation \eqref{eq:monad_r_gamma} by $(1,1)$ and apply $H^0$ we obtain
\begin{equation}\label{eq:H0(1,1)monad}
0\to H^0(\cO^{r(\gamma-1)}) \xrightarrow{H^0(f(1,1))} H^0\left(\begin{array}{c}
\cO(1,0)^{r\gamma}\\
\cO(0,1)^{r\gamma}
\end{array}\right) \xrightarrow{H^0(g(1,1))} H^0(\cO(1,1)^{r\gamma}) \to \cok H^0(g(1,1))
\end{equation}

\begin{lemma}\label{l:(1,1)_condition}
Let $E = E(f,g)$ be a monad bundle. Then $E$ has natural cohomology at $(1,1)$ if and only if equation \eqref{eq:H0(1,1)monad} is exact in the middle. Equivalently,
\begin{equation}\label{eq:(1,1)imkercok_condition}
\dim \im H^0(f(1,1)) = \dim \ker H^0(g(1,1)) = \dim \cok H^0(g(1,1)) = r(\gamma-1).
\end{equation}
\end{lemma}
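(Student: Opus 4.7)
The plan is to combine Proposition \ref{p:spectralResults} with an Euler-characteristic count at $(1,1)$. Each of the four line bundles $\cO$, $\cO(1,0)$, $\cO(0,1)$, $\cO(1,1)$ appearing in the twisted monad $\mathcal{M}^\bullet(1,1)$ has only $H^0$, so $\mathcal{M}^\bullet(1,1)$ has only $H^0$. The first column of \eqref{eq:spectralResults111} then identifies
\[
H^0(E(1,1)) \cong \frac{\ker H^0(g(1,1))}{\im H^0(f(1,1))}, \quad H^1(E(1,1)) \cong \cok H^0(g(1,1)), \quad H^2(E(1,1)) \cong 0.
\]

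Next I would use the Euler characteristic $\chi(E(1,1)) = r - r\gamma = -r(\gamma-1) < 0$ (which is negative since $\gamma > 1$). Natural cohomology at $(1,1)$ demands at most one $H^i(E(1,1))$ be nonzero; $H^2$ already vanishes, and the strictly negative Euler characteristic precludes $h^0 > 0$ with $h^1 = 0$, so the only option is $H^0(E(1,1)) = 0$. This is exactly $\ker H^0(g(1,1)) = \im H^0(f(1,1))$, i.e.\ exactness of \eqref{eq:H0(1,1)monad} at the middle term. The converse is immediate: $H^0(E(1,1)) = 0 = H^2(E(1,1))$ at once gives natural cohomology.

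Finally, to extract \eqref{eq:(1,1)imkercok_condition}, I would note that $H^0(f(1,1))$ is always injective (since $f$ is an injective bundle map and $H^0$ is left exact on $0 \to A(1,1) \to B(1,1)$), so $\dim \im H^0(f(1,1)) = h^0(\cO^{r(\gamma-1)}) = r(\gamma-1)$ unconditionally. Exactness in the middle then forces $\dim \ker H^0(g(1,1)) = r(\gamma-1)$. A quick count shows that the source and target of $H^0(g(1,1))$ both have dimension $4r\gamma$ (from $h^0(\cO(1,0))=h^0(\cO(0,1))=2$ and $h^0(\cO(1,1))=4$), so rank-nullity yields $\dim \cok H^0(g(1,1)) = \dim \ker H^0(g(1,1)) = r(\gamma-1)$. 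The converse direction is immediate since the three equalities imply exactness at the middle. The argument is largely bookkeeping, and I do not anticipate a serious obstacle beyond the careful identification of $H^*(E(1,1))$ via Proposition \ref{p:spectralResults}.
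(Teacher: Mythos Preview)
Your argument is correct and follows the same route as the paper: apply the first column of Proposition~\ref{p:spectralResults} at $(1,1)$, use $\chi(E(1,1))=r-r\gamma<0$ to force $H^0(E(1,1))=0$, and read off exactness in the middle. The paper's proof is terser and leaves implicit exactly the points you spell out (why only $H^1$ can survive, why $H^0(f(1,1))$ is injective, and the rank--nullity step giving $\dim\ker=\dim\cok$), but the logic is identical.
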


\begin{proof}
If $E$ has natural cohomology at $(1,1)$ it must only have $H^1$ so by proposition \ref{p:spectralResults} we must have that equation \eqref{eq:H0(1,1)monad} must be
\[
0\to \C^{r (\gamma-1)} \xrightarrow{H^0(f(1,1))} \C^{4 r \gamma} \xrightarrow{H^0(g(1,1))} \C^{4 r \gamma} \to \C^{r(\gamma-1)} \to 0.
\]
\end{proof}

\begin{lemma}\label{l:f_iso_H0(f)}
Let $F_1 = \cO^{a}$ and $F_2 = \cO(1,0)^{b} \oplus \cO(0,1)^{b}$. The map
\[
\hom(F_1,F_2) \to \hom(H^0(F_1),H^0(F_2))
\]
sending $f$ to $H^0(f)$ is an isomorphism.
\end{lemma}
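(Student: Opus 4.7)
The plan is to check that the map $\hom(F_1, F_2) \to \hom(H^0(F_1), H^0(F_2))$ given by $f \mapsto H^0(f)$ is a bijection between two finite-dimensional $\C$-vector spaces of the same dimension, with injectivity being the essential content.

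First I would compute both dimensions. On the source side, morphisms out of a trivial bundle are tuples of global sections, so $\hom(F_1, F_2) = \hom(\cO, F_2)^{\oplus a} = H^0(F_2)^{\oplus a}$. Since $\cO(1,0) = p^* \cO(1)$ and $\cO(0,1) = q^* \cO(1)$, each has $H^0 \cong \C^2$, and therefore $H^0(F_2) \cong \C^{4b}$, giving $\dim \hom(F_1, F_2) = 4ab$. On the target side, $H^0(F_1) \cong \C^a$ and $H^0(F_2) \cong \C^{4b}$, so $\dim \hom(H^0(F_1), H^0(F_2)) = 4ab$ as well.

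For injectivity, let $e_i \colon \cO \hookrightarrow F_1$ denote the inclusion of the $i$th summand, and write $e_1, \ldots, e_a$ also for the associated global sections, which form the standard basis of $H^0(F_1)$. A morphism $f \colon F_1 \to F_2$ is completely determined by the compositions $f \circ e_i \in \hom(\cO, F_2) = H^0(F_2)$, and tautologically $f \circ e_i = H^0(f)(e_i)$. Hence $H^0(f) = 0$ forces $f \circ e_i = 0$ for every $i$, so $f = 0$. Combined with the dimension count, this proves the lemma.

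Equivalently, one may invoke the adjunction $\hom_X(V \otimes_\C \cO_X, G) \cong \hom_\C(V, H^0(G))$ and apply it to $V = H^0(F_1) = \C^a$, $G = F_2$; under the canonical identification $F_1 = H^0(F_1) \otimes_\C \cO_X$, the map $f \mapsto H^0(f)$ is precisely the adjunction isomorphism. There is no real obstacle in this argument; the substantive point is that $F_1$ being trivial makes morphisms out of it interchangeable with their global-sections data.
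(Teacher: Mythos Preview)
Your proof is correct and follows essentially the same approach as the paper: both argue that the map is injective and that the source and target have the same dimension, using that $F_1$ is trivial so $\hom(F_1,F_2)\cong H^0(F_2)^{\oplus a}$. You simply spell out the injectivity and the dimension count in more detail than the paper does.
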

\begin{proof}
The map is injective and the vector spaces in questions have the same dimension. We can also see this directly:
\begin{align*}
\hom(F_1,F_2) &\cong \hom(\cO,F_2 )^{\oplus a}\\
&\cong \hom(H^0(\cO),H^0(F_2) )^{\oplus a}\\
&\cong \hom(H^0(F_1),H^0(F_2))
\end{align*}
\end{proof}
When $a = r(\gamma-1)$ and $b = r \gamma$ we have $f \in \hom(F_1,F_2) \cong W$. Moreover specifying $H^0(f(1,1))$ recovers $f(1,1)$ and to specify $H^0(f(1,1))$ it suffices to give $r(\gamma -1)$ elements of $H^0(F_2)$.

There are two projections
\[
H^0(\cO(1,0)^{r\gamma})\xleftarrow{p_1}H^0(F_2) \xrightarrow{p_2} H^0(\cO(0,1)^{r\gamma})
\]
and we say $v\in H^0(F_2)$ is {\it balanced} if $p_1(v)\neq 0$ and $p_2(v) \neq 0$.

Let $X_{(1,1)} \subset W^0 \x V^0$ be the subvariety of pairs $(f,g)$ such that the monad bundle $E = E(f,g)$ has natural cohomology at $(1,1)$.

\begin{prop}\label{p:g(1,1)_exists}
Let $v_1, \dotsc, v_{r(\gamma-1)} \in H^0(F_2)$ be linearly independent and balanced. Let $f \in W$ be the map whose columns are determined by the $v_i$. When $f \in W^0$ we can find a $g\in V^0$ such that the monad bundle $E = E(f,g)$ has natural cohomology at $(1,1)$. More specifically:
\begin{itemize}
\item[(a)] There is a positive dimensional linear space $L \subset V$ such that if $g \in L$ then $g \circ f = 0$.
\item[(b)] On a Zariski open set of $L$ we have that $H^0(f(1,1)),H^0(g(1,1))$ satisfy \eqref{eq:(1,1)imkercok_condition} of lemma \ref{l:(1,1)_condition}. 
\item[(c)] A  generic choice of $v_i$ will give an $f\in W^0$.
\item[(d)] The intersection $L\cap V^0$ is non empty. 
\item[(e)] The variety $X_{(1,1)}$ is positive dimensional. 
\end{itemize}
\end{prop}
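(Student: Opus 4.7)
The plan is to verify (a)--(e) in order. Parts (a) and (c) are elementary dimension counts; parts (b) and (d) are openness statements that reduce to exhibiting a single element of the total parameter space with all desired properties; (e) then combines the earlier parts.

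For (a), the assignment $g \mapsto g \circ f$ is a $\C$-linear map $V \to \hom(\cO(-1,-1)^{r(\gamma-1)}, \cO^{r\gamma})$ whose target has dimension $h^0(\cO(1,1)) \cdot r(\gamma-1) \cdot r\gamma = 4 r^2 \gamma(\gamma-1)$. Since $\dim V = 4 r^2 \gamma^2$, the kernel $L$ satisfies $\dim L \geq 4 r^2 \gamma > 0$. For (c), ``balanced'' and ``linearly independent'' cut out a nonempty Zariski open subset of $H^0(F_2)^{r(\gamma-1)}$, so the substance is showing $f \in W^0$ generically. The condition $f \in W^0$ demands that the rank-drop locus of $f$ on $\P^1 \x \P^1$ be empty; since $f$ is a map between bundles of ranks $r(\gamma-1) \to 2r\gamma$, this locus has expected codimension $2r\gamma - r(\gamma-1) + 1 = r\gamma + r + 1 \geq 3$, which exceeds $\dim \P^1 \x \P^1 = 2$. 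A standard transversality argument then gives $f \in W^0$ on a nonempty open subset of choices of $v_i$.

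For (b) and (d), I would reduce both to producing a single pair $(v_i, g)$ satisfying: the $v_i$ are balanced and linearly independent, $f = f(v_i) \in W^0$, $g \circ f = 0$, $g \in V^0$, and $\rank H^0(g(1,1)) = 3 r \gamma + r$. The first equality in \eqref{eq:(1,1)imkercok_condition} follows from linear independence of the $v_i$; the remaining two are equivalent since the source and target of $H^0(g(1,1))$ both have dimension $4 r \gamma$, making $\dim \ker = \dim \cok$. Combined with the automatic inclusion $\im H^0(f(1,1)) \subseteq \ker H^0(g(1,1))$ from $g \in L$, the rank assertion then recovers \eqref{eq:(1,1)imkercok_condition}. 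All five conditions are open on the total space $H^0(F_2)^{r(\gamma-1)} \times V$, so producing one example propagates by openness. The main obstacle will be constructing this pair. My plan is to start from a visibly bundle-surjective block-diagonal $g_0 \in V^0$ as in Lemma \ref{l:(1,1)_easy_case}, and then pick the $v_i$ as balanced linearly independent global sections of the rank-$r\gamma$ sub-bundle $\ker g_0(1,1) \subset F_2(1,1)$; this ensures $g_0 \circ f = 0$ tautologically. A direct computation---using that $\ker g_0(1,1)$ has enough global sections and that their projections onto the two summands $\cO(1,0)^{r\gamma}$ and $\cO(0,1)^{r\gamma}$ of $F_2(1,1)$ can be arranged to be simultaneously nonzero---verifies balancedness, linear independence, and the rank assertion for generic such choices.

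Finally, for (e), parts (c) and the openness propagation in (b), (d) show that $X_{(1,1)}$ contains a Zariski open subset of a constructible family fibered over a positive-dimensional open subvariety of $H^0(F_2)^{r(\gamma-1)}$, with each fiber containing a positive-dimensional open piece of $L_f$. Hence $X_{(1,1)}$ is itself positive-dimensional.
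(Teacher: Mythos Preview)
Your treatment of (a), (c), and (e) is fine and essentially matches the paper. The substantive problems are in your reduction for (b) and (d).

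First, the claim that ``all five conditions are open on the total space $H^0(F_2)^{r(\gamma-1)} \times V$'' is false: $g \circ f = 0$ is a closed condition. Even if you repair this by working inside the incidence variety $\{(f,g): g\circ f = 0\}$, exhibiting one good pair $(f_0,g_0)$ only shows that the \emph{generic} fiber $L_f$ contains a good point. But (b) and (d) are asserted for the $L$ attached to an \emph{arbitrary} balanced, linearly independent $f \in W^0$; this is what the paper actually uses later, when it applies the proposition to a specific $f$ obtained as a specialization from the generic point of $X^\star_{(2,2)}$.

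Second, your explicit construction does not work. In the $\gamma > 1$ monad the middle term is $\cO(0,-1)^{r\gamma} \oplus \cO(-1,0)^{r\gamma}$, so after twisting by $(1,1)$ both source and target of $H^0(g(1,1))$ have dimension $4r\gamma$. The block-diagonal $g_0$ of Lemma~\ref{l:(1,1)_easy_case} (restricted to these summands) makes $H^0(g_0(1,1))$ an isomorphism, hence $H^0(\ker g_0(1,1)) = 0$, and there are no sections $v_i$ to choose. Any $g$ satisfying \eqref{eq:(1,1)imkercok_condition} must have $\rank H^0(g(1,1)) = 3r\gamma + r$, which is \emph{strictly less} than maximal; this is exactly why the problem is locally closed rather than open on $V$, and why a ``pick a nice $g_0$ first'' strategy is awkward here.

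The paper's route is to fix $f$ first and argue at the generic point of $L = L_f$. The ``balanced'' hypothesis is used precisely to guarantee that each linear constraint $g(v_i)=0$ forces only $v_i$ itself into $\ker H^0(g(1,1))$ and nothing extra (the paper contrasts this with the unbalanced vector $(z_0,0,\dotsc,0)^\intercal$, where $g(v)=0$ drags an entire pencil into the kernel). This gives $\ker H^0(g_{\mathrm{gen}}(1,1)) = \operatorname{span}(v_i)$ directly, proving (b) for the given $f$; for (d) the paper argues pointwise over $\P^1\times\P^1$ that surjectivity at $p$ is a nonempty open condition on $L$.
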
 

\begin{proof}
For $g$ to satisfy the necessary condition that $g \circ f = 0$ it must be the case that $g(v_i)=0$. A general $g$ has the form
\[
g = \left(\begin{array}{cccccc}
a^0_{1,1}w_0+a^1_{1,1}w_1 & \dotsb & a^0_{1,r \gamma}w_0+a^1_{1,r \gamma}w_1 & b^0_{1,1}z_0+b^1_{1,1}z_1 & \dotsb & b^0_{1,r \gamma}w_0+b^1_{1,r \gamma}w_1 \\
\vdots & \ddots & \vdots & \vdots & \ddots & \vdots \\
a^0_{r\gamma,1}w_0+a^1_{r \gamma,1}w_1 & \dotsb & a^0_{r\gamma,r\gamma}w_0+a^1_{r\gamma,r\gamma}w_1 & b^0_{r\gamma,1}w_0+b^1_{r \gamma,1}w_1 & \dotsb & b^0_{r\gamma,r\gamma}z_0+b^1_{r\gamma,r\gamma}z_1
\end{array}\right)
\]
The variables $a_{i,j}^k,b_{i',j'}^{k'}$ give coordinates on $V$.
The vector $g(v_i)$ has $r \gamma$ entires and each entry is a linear combination of $z_0w_0,z_0w_1,z_1w_0,z_1w_1$ whose coefficients are {\it linear} forms in the variables $a_{i,j}^k,b_{i',j'}^{k'}$. Thus the condition $g(v_i) = 0$ is the intersection of $4 r \gamma$ hyperplanes in $V$. Therefore the condition that $g\circ f = 0$ is the intersection of $4 r\gamma(r \gamma - r)$ hyperplanes in $V$. That is, the linear space of maps $g$ such that $g \circ f = 0$ has codimension at most $4 r\gamma(r \gamma - r)$. Recall $\dim V = 4 (r \gamma)^2$ so, when the intersection is transverse, there is at least a $4r^2\gamma$ dimensional linear space  $L$ of $g$ such that $g \circ f = 0$. This proves (a)

The condition that the $v_i$ are balanced is a convenient condition to rule out some bad choices. For example if $v = (z_0\ 0 \dotsc 0 )^\intercal$ then $g(v) = 0$ implies also that $g(v')=0$ where $v = (a z_0+b z_1\ 0 \dotsc 0 )^\intercal$. When $v_i$ are balanced and independent then the condition $g(v_i) = 0$ will only increase $\ker H^0(g(1,1))$ by $1$. Moreover if we take $g_{gen}$ to be the generic point of $L$ then the $v_i$ will be the only element of $\ker H^0(g(1,1))$. Thus on a Zariski open set $L^{(1,1)}$ of $L$ we have $\ker H^0(g(1,1)) =\im H^0(f(1,1))$; this ensures \eqref{eq:(1,1)imkercok_condition} holds hence (b).

A choice of the $v_i$ is by lemma \ref{l:f_iso_H0(f)} the same as choosing a point in $W$ and the choices in $W^0$ represents a Zariski open set of choices, hence (c).

To prove (d) we will show that given a point $p\in \P^1 \x \P^1$ there is a nonempty Zariski open set $L_p \subset L$ such that $g$ is surjective at $p$. Indeed, localizing at $p$ we get a map $\cO_p^{2 r\gamma}\xrightarrow{g_p} \cO_p^{r\gamma}$. Then $g_p$ being surjective is given by the nonvanishing of maximal minors of the  matrix $g_p$. Thus $g_p$ is surjective away from the vanishing of a finite number of polynomials in the coordinates of $L$. Now $L$ is covered by the open sets $L_p$ and as $L$ is quasicompact a finite number $L_{p_1}, \dotsc, L_{p_n}$ will do. Then $\bigcap_i L_{p_i}$ will satisfy (d).

Choosing $f \in W^0$ and $g \in L^{(1,1)} \cap \bigcap_i L_{p_i}$ then we can form the monad bundle $E = E(f,g)$ and it will natural cohomology at $(1,1)$ by lemma \ref{l:(1,1)_condition}. In building $E$ we made choices with continuous moduli hence (e). 
\end{proof}

We've presented a proof of existence but in practice we can produce actual examples.

\begin{example}
Let $p = x y - 2$ and set $r = 2$. The form of our monad is 
\[
0 \to \cO(-1,-1)^2 \xrightarrow{f} \begin{array}{c}
\cO(0,-1)^4 \\
\cO(-1,0)^4\\
\end{array} \xrightarrow{g} \cO^4 \to 0
\]
We begin by choosing $f$ arbitrarily
\[
\left(\begin{array}{cccccccc}
z_0 & 2z_0+3z_1 & 5z_0+7z_1 & 8z_0+9z_1 & 9w_0+w_1 & 3w_0+8w_1 & 2w_0+5w_1 & 7w_0+6w_1\\
z_0+7z_1 & 2z_0+9z_1 & 8z_0+5z_1 & 6z_0+z_1 & 3w_0+5w_1 & 7w_0+2w_1 & w_0+11w_1 & w_0
\end{array}\right)^\intercal
\]
The first column of $f$ is in the kernel of
\[
g = \left(\begin{array}{cccccc}
a^0_{1,1}w_0+a^1_{1,1}w_1 & \dotsb & a^0_{1,4}w_0+a^1_{1,4}w_1  & b^0_{1,1}z_0+b^1_{1,1}z_1 & \dotsb & b^0_{1,4}z_0+b^1_{1,4}z_1 \\
\vdots & \ddots & \vdots & \vdots & \ddots & \vdots\\
 a^0_{4,1}w_0+a^1_{4,1}w_1 & \dotsb  & a^0_{4,4}w_0+a^1_{r\gamma,r\gamma}w_1 & b^0_{4,1}z_0+b^1_{4,1}z_1 & \dotsb & b^0_{4,4}z_0+b^1_{4,4}z_1
\end{array}\right)
\]
when $16$ linear conditions are satisfied. The first of these conditions is the coefficient of
\[
\left(\begin{array}{c}
z_0w_0\\
0\\
0\\
0
\end{array}\right)
\]
is zero and this condition is
\[
a_{11}^0+2a_{12}^0+5a_{13}^0+8a_{14}^0+9b_{11}^0+3b_{12}^0+2b_{13}^0+7b_{14}^0=0
\]
Then the linear space $L$ of matrices $g$ satisfying $g \circ f = 0$ is $32$ dimensional. One can find a basis for $L$ and choose a general element. One such point is $g = \left(\begin{smallmatrix} g_1 & g_2\end{smallmatrix}\right)$ where
\begin{align*}
g_1 &= \left(\begin{array}{cccccccc}
-18860w_0-19145w_1 & \frac{26215}{2}w_0+\frac{34705}{2}w_1 & 3120w_0-4385w_1 & \frac{-16725}{2}w_0-\frac{8455}{2}w_1 \\
-4110w_0+10690w_1 & -1258w_0-12466w_1 & 4758w_0+1916w_1 & -7433w_0-2296w_1 \\
-13845w_0-4450w_1 & 8830w_0+1476w_1 & 2015w_0-3506w_1 & -6025w_0-1029w_1\\
-3035w_0+850w_1 & -711w_0-2793w_1 & 1921w_0-302w_1 & -4756w_0-2803w_1
\end{array}\right)\\
g_2 &= \left(\begin{array}{cccccccc}
 1880z_0 & 940z_0+705z_1 & 3055z_0+235z_1 & 2585z_0+1645z_1\\
 2350z_0+940z_1 & 470z_0+2350z_1 & 1880z_1 & 2820z_0+2585z_1\\
2115z_0 & 940z_0+1410z_1 & 2115z_0+3055z_1 & 1175z_0+470z_1\\
1645z_0+1175z_1 & 1410z_0+2350z_1 & 1175z_0+1175z_1 & 1645z_0+1645z_1
\end{array}\right)
\end{align*}
One can also check that these maps $f,g$ belong to $W^0,V^0$ respectively. 
\end{example}

We now prove the symmetric result applied to $f^\star, g^\star$ at the twist $(2,2)$:
\[
0\to \cO^{r \gamma} \xrightarrow{g^\star(2,2)} \begin{array}{c}
\cO(1,0)^{r \gamma} \\ \cO(0,1)^{r \gamma}
\end{array} \xrightarrow{f^\star(2,2)} \cO^{r(\gamma - 1)}(1,1) \to 0
\]
The argument is completely symmetric to what's already been stated so we collect all the dual statements in a single proposition. If $A,B$ are locally free sheaves and $U = \hom(A,B)$ then we denote $U^\intercal = \hom(B^\vee,A^\vee)$; it is isomorphic to $U$. Let $X^\star_{(2,2)} \subset (V^0)^\intercal \x (W^0)^\intercal$ be the subvariety of pairs $(g^\star, f^\star)$ such that $E^\star =E(g^\star, f^\star)$ has natural cohomology at $(2,2)$.

\begin{prop}\label{p:f_star(1,1)_exists}
Let $F_2 = \cO(1,0)^{r\gamma} \oplus \cO(0,1)^{r\gamma}$ and $f^\star \in W^\intercal$,$g^\star \in V^\intercal$.
\begin{itemize}
\item[(1)] Let $E^\star = E(g^\star,f^\star)$ be a monad bundle for a monad that is Serre dual to \eqref{eq:monad_r_gamma}. When $\gamma \leq 4$ then $E^\star$ has natural cohomology at $(2,2)$ if and only if $H^0(f^\star(2,2))$ is surjective. When $\gamma>4$ then $E^\star$ has natural cohomology at $(2,2)$ if and only if
\begin{equation}\label{eq:(2,2)imkercok_condition}
\dim \im H^0(g^\star(2,2)) = \dim \ker H^0(f^\star(2,2)) = r \gamma
\end{equation}
\item[(2)] If $\gamma \leq 4$ then a there is a nonempty Zariski open subset $(W^{(2,2)})^\intercal \subset (W^0)^\intercal$ such that for $f \in (W^{(2,2)})^\intercal$ we have $H^0(f^\star(2,2))$ is surjective. Taking $r \gamma$ elements of $\ker H^0(f^\star(2,2))$ gives the columns of a matrix $g^\star \in V^\intercal$. A general choice of $g^\star$ lies in $(V^0)^\intercal$.
\item[(3)]  Assume $\gamma >4$. Let $v_1, \dotsc, v_{r\gamma-4r} \in H^0(F_2)$ be linearly independent and balanced.
\begin{itemize}
\item[(a)] There is a positive dimensional linear space $L \subset W^\intercal$ such that if $f^\star \in L$ then $f^\star(v_i) = 0$ for all $i$.
\item[(b)] On a Zariski open set of $L$ we can find a $g^\star\in V^\intercal$ such that $v_i \in \im g^\star$, $f^\star \circ g^\star = 0$ and $H^0(f^\star(2,2)),H^0(g^\star(2,2))$ satisfy \eqref{eq:(2,2)imkercok_condition}.
\item[(c)] If the $v_i$ are generic we can moreover assume $g^\star \in (V^0)^\intercal$
\item[(d)] The intersection $L\cap (W^0)^\intercal$ is non empty.
\item[(e)] The variety $X^\star_{(2,2)}$ is positive dimensional. 
\end{itemize}
\end{itemize}
\end{prop}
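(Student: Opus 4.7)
The plan is to run the argument of Proposition \ref{p:g(1,1)_exists} in Serre dual form. For (1), I apply Proposition \ref{p:spectralResults} to the Serre dual monad twisted by $(2,2)$. Since all three terms $\cO^{r\gamma}$, $\cO(0,1)^{r\gamma}\oplus \cO(1,0)^{r\gamma}$, $\cO(1,1)^{r(\gamma-1)}$ have only $H^0$, we sit in the first column of that table (with the roles of $f$ and $g$ swapped, since the dual monad has $g^\star$ first and $f^\star$ second). A quick Euler-characteristic computation via the display gives $\chi(E^\star(2,2)) = r(4-\gamma)$. For $\gamma \leq 4$, natural cohomology at $(2,2)$ means only $H^0$, equivalent to $\cok H^0(f^\star(2,2)) = 0$; for $\gamma > 4$, it means only $H^1$, and combined with the automatic injectivity of $H^0(g^\star(2,2))$ (since $g^\star$ is an injective sheaf map and $H^0$ is left exact, so $\dim \im H^0(g^\star(2,2)) = r\gamma$) this is equivalent to condition \eqref{eq:(2,2)imkercok_condition}. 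For (2), surjectivity of $H^0(f^\star(2,2))$ is cut out by the non-vanishing of maximal minors and hence open; non-emptiness follows from a staircase matrix as in Lemma \ref{l:(1,1)_easy_case}. The kernel of $H^0(f^\star(2,2))$ then has dimension $4r \geq r\gamma$, so one picks $r\gamma$ generic elements as columns of $g^\star$; openness of sheaf-map injectivity puts a generic $g^\star$ in $(V^0)^\intercal$.

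For (3), I mirror the structure of Proposition \ref{p:g(1,1)_exists}. Part (a) is a direct linear count: each equation $f^\star(v_i) = 0$ imposes $4r(\gamma-1)$ conditions on $W^\intercal$, so $\dim L \geq 4r^2\gamma(\gamma-1) - 4r^2(\gamma-1)(\gamma-4) = 16 r^2(\gamma-1) > 0$. For (b), the cleanest construction builds the pair $(f^\star,g^\star)$ jointly: pick a $4r$-dimensional subspace $U \subseteq H^0(F_2)$ complementary to $\mathrm{span}(v_1,\dotsc,v_{r(\gamma-4)})$, set $K := \mathrm{span}(v_i)\oplus U \subseteq \C^{4r\gamma}$, let $g^\star$ have columns forming a basis of $K$ (so $v_i\in \im g^\star$ automatically), and take $f^\star$ to factor through $\pi_K$ via a generic injection $\bar f^\star\colon \C^{4r\gamma}/K\cong \C^{3r\gamma}\hookrightarrow \C^{4r(\gamma-1)}$, which exists because $3r\gamma\leq 4r(\gamma-1)$ whenever $\gamma\geq 4$. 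By design $f^\star\in L$, $f^\star\circ g^\star = 0$, and $\dim \ker H^0(f^\star(2,2)) = r\gamma = \dim \im H^0(g^\star(2,2))$, so \eqref{eq:(2,2)imkercok_condition} holds. Membership of $f^\star$ in $(W^0)^\intercal$ and of $g^\star$ in $(V^0)^\intercal$ is open in these choices and is verified for generic balanced $v_i$, generic $U$, and generic basis of $K$, by arguments analogous to Corollary \ref{c:V_llcorner non empty} and Proposition \ref{p:g(1,1)_exists}(c),(d); this establishes (c) and (d). Part (e) follows from the positive-dimensional moduli of the triples $(U,\bar f^\star,\text{basis})$.

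The main obstacle lies in (3)(b): a truly generic point of $L$ has $\dim \ker H^0(f^\star(2,2)) = \max(4r, r(\gamma-4))$, which is strictly less than $r\gamma$ (one sees this by writing $f^\star = \bar f^\star\circ \pi_{\mathrm{span}(v_i)}$ and taking $\bar f^\star$ of generic rank). Unlike Proposition \ref{p:g(1,1)_exists}, where the analogous map was square and a generic rank drop matched the target, here the target of $f^\star$ is strictly smaller than its source, so a generic $f^\star \in L$ is actually \emph{too surjective}; we must impose that $f^\star$ be rank-deficient to precisely $3r\gamma$, a closed determinantal condition. The phrase ``Zariski open set of $L$'' in (b) should therefore be read as an open subset of the determinantal locus $L\cap \{\rank f^\star\leq 3r\gamma\}$, and the joint construction above exhibits this locus as non-empty and produces a positive-dimensional family of valid $(f^\star,g^\star)$ pairs on it.
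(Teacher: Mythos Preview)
Your treatment of parts (1) and (2) matches the paper's and is correct. For part (3) your construction diverges from the paper's and contains a gap.

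The paper argues (3)(b) entirely inside $W^\intercal$: it takes a basis $k_1,\dots,k_{4r}$ of $\ker H^0(f^\star_{gen}(2,2))$ at the generic point of $W^\intercal$, specializes these to points $p\in L$ (where they remain kernel elements, since the identity $H^0(f^\star(2,2))(k_i)=0$ holds generically and hence after specialization), and then takes the columns of $g^\star$ to be the $v_j$ together with the $k_i(p)$. The open condition on $L$ is that these $r\gamma$ vectors be linearly independent.

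Your construction instead fixes an $r\gamma$-dimensional subspace $K\subset H^0(F_2)$ and builds $f^\star$ as $\bar f^\star\circ\pi_K$ for a ``generic injection'' $\bar f^\star\colon\C^{3r\gamma}\hookrightarrow \C^{4r(\gamma-1)}$. The problem is that $f^\star$ must lie in $W^\intercal$, and the inclusion $W^\intercal\hookrightarrow \hom(\C^{4r\gamma},\C^{4r(\gamma-1)})$ given by $f^\star\mapsto H^0(f^\star(2,2))$ is proper: its image has dimension $4r^2\gamma(\gamma-1)$ inside a space of dimension $16r^2\gamma(\gamma-1)$. A generic linear map with prescribed kernel $K$ will not lie in this image, and you give no argument that $\bar f^\star$ can be chosen so that the composite is induced by a sheaf map while remaining injective. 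Lemma~\ref{l:f_iso_H0(f)} only covers maps \emph{out of} a trivial bundle, so it applies to $g^\star$ but not to $f^\star$.

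The same oversight undermines your final paragraph. Your claim that a generic point of $L$ has $\dim\ker H^0(f^\star(2,2))=\max(4r,r(\gamma-4))$ is obtained by treating the induced $\bar f^\star$ as a generic linear map; but $\bar f^\star$ is constrained by the sheaf structure, so this rank computation is unjustified. Indeed, the paper's specialization already places $k_i(p)$ in $\ker H^0(f^\star_p(2,2))$ for every $p\in L$ where the $k_i$ are regular, so if the $k_i(p)$ are independent of the $v_j$ the kernel has dimension at least $r\gamma$ there, directly contradicting your asserted upper bound. You have correctly spotted that the condition on $f^\star$ is a priori only locally closed in $W^\intercal$; whether it becomes open on $L$ is exactly the question of whether the $k_i(p)$ are generically independent of the $v_j$, and that is what the paper's argument (tersely) asserts.
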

\begin{proof}
Part (1) is proved exactly as is lemma \ref{l:(1,1)_condition} except that now we get two cases depending on the sign of $4 r - r \gamma$. When $4 r - r \gamma>0$ the bundle $E^\star(2,2)$ should only have $H^0$ which by proposition \ref{p:spectralResults} means that $H^0(f^\star(2,2))$ is surjective. When $4 r - r \gamma<0$ the bundle $E^\star(2,2)$ should only have $H^1$ which is equivalent to \eqref{eq:(2,2)imkercok_condition}.

For (2) by a explicit example as in lemma \ref{l:(1,1)_easy_case} we can find a choice of $f^\star(2,2)$ which has maximal rank. Therefore there is a nonempty open subset $W' \subset W^\intercal$ such that $H^0(f^\star(2,2))$ has maximal rank whenever $f^\star \in W'$. We take $(W^{(2,2)})^\intercal = W' \cap (W^0)^\intercal$. We can dualize $f^\star$ to get a map $f$ with columns $v_1, \dotsc, v_{r (\gamma -1)}$ and apply proposition \ref{p:g(1,1)_exists} to see that we can find a $g \in V^0$ and hence a $g^\star \in (V^0)^\intercal$.

The proof of (3) is the same as the proof of proposition \ref{p:g(1,1)_exists} except for some minor details. As mentioned above, a generic choice $f^\star$ will have $H^0(f^\star(2,2))$ surjective and so before imposing any conditions there will already by $4 r$ elements in $\ker H^0(f^\star(2,2))$. Therefore we consider the condition that $r \gamma - 4r$ vectors lie in $\ker H^0(f^\star(2,2))$.

To make this precise let $f^\star_{gen}$ be the generic point of $(W)^\intercal$. Let $K$ be the residue field of $f^\star_{gen}$ then we can consider $f^\star_{gen}$ as a matrix with entries in the field $K$. The conditions $f^\star_{gen}(v_i)$ amount to $(r \gamma - 4 r)4r(\gamma-1) = 4r^2(\gamma-4)(\gamma -1)$ conditions. This is less then $\dim W^\intercal = 4 r^2\gamma(\gamma - 1)$ so the common intersections $L$ of all these conditions is still positive dimensional. This proves (a).

As mentioned above, $H^0(f^\star_{gen}(2,2))$ is surjective hence $\dim \ker H^0(f^\star_{gen}(2,2)) = 4r$ and we can choose a basis $k_1, \dotsc, k_{4r}$. Then the condition that for any $p \in L$ the vectors $k_i(p)$ don't like in the span of the $v_i$ is an open condition hence taking the $v_i$ and $k_i(p)$ together give us the columns of the desired matrix $g^\star$ hence (b).

Parts (c), (d) and (e) are proved exactly as in proposition \ref{p:g(1,1)_exists}. 
\end{proof}

Let $X^{(2,2)}_{(1,1)} \subset W^0 \x V^0$ denote the variety of pairs $(f,g)$ such the monad bundle $E = E(f,g)$ has natural cohomology at $(1,1)$ and the Serre dual bundle $E^\star = E(g^\star,f^\star)$ has natural cohomology at $(2,2)$.

\begin{cor}
The variety $X^{(2,2)}_{(1,1)}$ is positive dimensional. 
\end{cor}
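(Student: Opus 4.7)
The plan is to exhibit a nonempty positive-dimensional subvariety of $X^{(2,2)}_{(1,1)} = X_{(1,1)} \cap X^\star_{(2,2)}$, where the two factors are viewed inside $W^0 \times V^0$ via the natural identifications $V^\intercal \cong V$, $W^\intercal \cong W$. I would split the argument along the dichotomy of Proposition \ref{p:f_star(1,1)_exists}, treating the cases $\gamma \leq 4$ and $\gamma > 4$ separately.

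When $\gamma \leq 4$, Proposition \ref{p:f_star(1,1)_exists}(1) reduces the $(2,2)$-condition on the Serre dual to the single open condition that $H^0(f^\star(2,2))$ be surjective, which depends only on $f$. Proposition \ref{p:f_star(1,1)_exists}(2) then exhibits a nonempty Zariski-open $(W^{(2,2)})^\intercal \subset W^0$ where this holds, while Proposition \ref{p:g(1,1)_exists} shows the projection $X_{(1,1)} \to W^0$ is dominant onto a nonempty open of $W^0$. Picking $f_0$ in the intersection of these two nonempty Zariski-open subsets of $W^0$, the entire positive-dimensional fiber of $X_{(1,1)}$ over $f_0$ lies automatically in $X^\star_{(2,2)}$, and hence in $X^{(2,2)}_{(1,1)}$.

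The case $\gamma > 4$ is more delicate because the rank condition \eqref{eq:(2,2)imkercok_condition} is genuinely a locally closed condition in both $f$ and $g$. My plan is to run the constructions of Propositions \ref{p:g(1,1)_exists} and \ref{p:f_star(1,1)_exists}(3) in tandem. I would select balanced linearly independent vectors $v_1, \dotsc, v_{r(\gamma-1)}$ and $w_1, \dotsc, w_{r\gamma - 4r}$ in $H^0(F_2)$ in sufficiently general position; the $v_i$ will form the columns of $f$, while the $w_j$ will be constrained to lie in $\ker H^0(f^\star(2,2))$. The latter is a linear condition on $f$ which, together with the generic open conditions that $f \in W^0$ have balanced linearly independent columns and that $H^0(f(1,1))$ have maximal rank, cuts out a positive-dimensional locally closed subvariety of $W$. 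Once such an $f$ is fixed, the linear space $L_f = \{g \in V : g \circ f = 0\}$ is positive dimensional, and the $(1,1)$- and $(2,2)$-rank conditions define Zariski-open subsets $L^{(1,1)}(f), L^{(2,2)}(f) \subset L_f$ by Propositions \ref{p:g(1,1)_exists}(b) and \ref{p:f_star(1,1)_exists}(3)(b); by irreducibility of $L_f$ their intersection is nonempty and positive dimensional, yielding the desired family inside $X^{(2,2)}_{(1,1)}$.

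The main obstacle is the $\gamma > 4$ case: verifying that the closed rank-drop condition on $f$ coming from $X^\star_{(2,2)}$ is compatible with the generic open conditions required to place $f$ in $X_{(1,1)}$. This reduces to a dimension count for the rank-drop stratum of $H^0(f^\star(2,2))$ inside $W$ against the open loci from the $(1,1)$-construction — essentially the only place where new work beyond the two preceding propositions is required.
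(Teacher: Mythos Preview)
Your overall strategy---intersect the $(1,1)$ and $(2,2)$ constructions coming from Propositions \ref{p:g(1,1)_exists} and \ref{p:f_star(1,1)_exists}---is the paper's strategy, and your $\gamma\le 4$ argument is correct. But in the $\gamma>4$ paragraph there is an internal inconsistency: you declare that the $v_i$ \emph{are} the columns of $f$, which determines $f$ completely, and then speak of imposing $f^\star(w_j)=0$ as ``a linear condition on $f$''. You cannot do both. Relatedly, you cite Proposition \ref{p:f_star(1,1)_exists}(3)(b) for an open condition on $L_f=\{g: g\circ f=0\}$, but that item concerns an open subset of the linear space $L\subset W^\intercal$ of $f^\star$'s, not of $g$'s. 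The honest remaining $(2,2)$ condition on $g$, once $f$ already satisfies $\dim\ker H^0(f^\star(2,2))=r\gamma$, is simply that $H^0(g^\star(2,2))\colon \C^{r\gamma}\to\C^{4r\gamma}$ have maximal rank; this is open in $g$ but you have not isolated it.

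The paper sidesteps your dichotomy and your compatibility obstacle in one move: it takes $\zeta$ to be the generic point of a maximal component of $X^\star_{(2,2)}$, sets $f_\zeta=\pi(\zeta)$ over the residue field $K$, and then \emph{defines} the $v_i$ to be the columns of $f_\zeta$. Proposition \ref{p:g(1,1)_exists} applies verbatim over $K$ to produce a positive-dimensional $L_{(1,1)}$ of $g$'s with $(f_\zeta,g)\in X_{(1,1)}(K)$, and the only remaining $(2,2)$ requirement is the open maximal-rank condition on $H^0(g^\star(2,2))$. The ``dimension count for the rank-drop stratum against the open loci'' you flag as the main obstacle thus never has to be carried out: the rank-drop on $f$ is already encoded in $X^\star_{(2,2)}$, and the hypotheses of Proposition \ref{p:g(1,1)_exists} (balanced, linearly independent columns, $f\in W^0$) are open conditions that automatically hold at the generic point of an irreducible variety once they hold at any point---which Proposition \ref{p:f_star(1,1)_exists} supplies.
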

\begin{proof}
Let $\pi \colon X^\star_{(2,2)} \to W^\intercal$ be the projection. Choose a component $Z \subset X^\star_{(2,2)}$ of maximal dimension let $K$ be the residue field of the generic point $\zeta \in Z$. Set $f^\star_\zeta = \pi(\zeta)$; then $f^\star_\zeta$ can be considered as a $r(\gamma-1) \x 2 r \gamma$ matrix with entries in the field $K$. Let $f_\zeta$ be its transpose and let $v_1, \dotsc, v_{r(\gamma-1)}$ be the columns of $f_\zeta$. Apply proposition \ref{p:g(1,1)_exists} to the $v_i$ to produce a positive dimensional variety $L_{(1,1)}$ such that $f_\zeta \x L_{(1,1)} \subset X_{(1,1)}(K)$. 

Let $f$ be a specialization of $f_\zeta$ in $X^\star_{(2,2)}$ and $g\in L_{(1,1)}$ then $(f,g) \in X_{(1,1)}^{(2,2)}$ when $H^0(g^\star(2,2))$ has maximal rank which is an open condition on $L_{(1,1)}$. This shows there is a Zariski open subset $U\subset \pi(Z) \x L_{(1,1)}$ such that $U \subset X^{(2,2)}_{(1,1)}$ and $U$ is non empty because it contains the generic point of $f_\zeta \x L_{(1,1)}$. Hence $X_{(1,1)}^{(2,2)}$ contains a positive dimensional variety. 
\end{proof}

Over $\P^1 \x \P^1 \x X_{(1,1)}^{(2,2)}$ we have a universal monad bundle $E^{univ}$ such that 
\[
E^{univ}|_{\P^1 \x \P^1 \x (f,g)} \cong E(f,g)
\]
Let $Z_{(1,1)}^{(2,2)} \subset X_{(1,1)}^{(2,2)}$ be an irreducible component of maximal dimension.
\begin{prop}\label{p:nat_cohom_at_(a,b)}
Assume $(a,b) = (1,0),(0,1),(0,-1),(-1,0)$ or $a >0,b>0$ or $a<0,b<0$. Then there is a nonemtpy Zariski open subset $U_{a,b} \subset Z_{(1,1)}^{(2,2)}$ such that $E^{univ}(a,b)$ has natural cohomology on every fiber over $U_{(a,b)}$.
\end{prop}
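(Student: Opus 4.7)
The plan is to reduce, for each $(a,b)$ in the stated set, the condition that $E^{univ}(a,b)$ has natural cohomology on a given fiber to an open rank condition on the cohomology maps $H^i(f(a,b))$ and $H^i(g(a,b))$ via Proposition \ref{p:spectralResults}, and then show the resulting open locus in $Z_{(1,1)}^{(2,2)}$ is non-empty. By upper semicontinuity it suffices in each case to exhibit a single $(f,g)$ at which natural cohomology holds at $(a,b)$.

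For the four small twists I would inspect the three monad terms $A(a,b), B(a,b), C(a,b)$ using the K\"unneth formula. For $(a,b)=(-1,0)$ or $(0,-1)$ exactly one term has nonzero cohomology, living in degree one, so Proposition \ref{p:spectralResults} forces natural cohomology of $E(a,b)$ for every $(f,g)$ and $U_{(a,b)}=Z_{(1,1)}^{(2,2)}$. For $(a,b)=(1,0)$ or $(0,1)$ only $H^0$ of the twisted monad is nonzero, and since $H^0(A(a,b))=0$, natural cohomology reduces to injectivity of $H^0(g(a,b))$, an open non-empty condition by the same computation as in Corollary \ref{c:V_llcorner non empty}.

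For $(a,b)$ in the positive quadrant all three terms have only $H^0$. Since $f\in W^0$ is an injective sheaf map, $H^0(f(a,b))$ is automatically injective, and by Proposition \ref{p:spectralResults} natural cohomology reduces to a rank condition on $H^0(g(a,b))$: surjectivity when $ab>\gamma$, or the kernel having dimension $ab\cdot r(\gamma-1)$ when $ab<\gamma$. To show non-emptiness I would adapt the push-forward argument of Propositions \ref{p:sign_flip} and \ref{p:everything_except_negative} to the monad setting: starting from natural cohomology at $(1,1)$ together with the small-twist results, Lemma \ref{l:kunneth} and the projection formula constrain the isomorphism types of $p_*E(0,1)$ and $R^1p_*E(0,1)$ on $\P^1$; twisting by $\cO_{\P^1}(m)$ then propagates natural cohomology of $E$ along $(m,1)$, and iterating with $p_*E(0,n)$ and $q_*E(n,0)$ fills in the remainder of the positive quadrant. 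For the negative quadrant I would invoke Serre duality: natural cohomology of $E$ at $(a,b)$ with $a,b<0$ is equivalent to natural cohomology of $E^\star=E(g^\star,f^\star)$ at $(-a,-b)$. A direct inspection of the Serre dual monad at $(1,1)$ shows that only one term has nonzero cohomology there, so $E^\star$ has natural cohomology at $(1,1)$ automatically; combined with natural cohomology of $E^\star$ at $(2,2)$, built into the definition of $Z_{(1,1)}^{(2,2)}$, and along positive axes (dualized from the small-twist step applied to $E$), the positive-quadrant argument applied to $E^\star$ gives its natural cohomology throughout the positive quadrant.

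The main obstacle is the push-forward step in the positive quadrant: for $\gamma>1$ both $p_*E(0,n)$ and $R^1p_*E(0,n)$ are nonzero in general, unlike the situation in Proposition \ref{p:sign_flip}, so each induction step requires combining natural cohomology at enough twists $(m,n)$ to pin down the splitting types on $\P^1$ rather than just two consecutive values of $m$. Since each such condition is open on $Z_{(1,1)}^{(2,2)}$ and we are free to impose any finite list of them, this can be arranged, but the bookkeeping is the heart of the argument.
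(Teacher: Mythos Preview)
Your reductions for the four small twists, and the identification of the open conditions via Proposition \ref{p:spectralResults}, are correct and match the paper. The gap is in the non-emptiness argument for the positive quadrant.

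You propose to propagate natural cohomology outward from $(1,1)$ and the axes via push-forward, adapting Propositions \ref{p:sign_flip} and \ref{p:everything_except_negative}. But for $\gamma>1$, knowing natural cohomology at $(0,1)$ and $(1,1)$ only tells you that $p_*E(0,1)$ splits as a sum of $\cO(\le -2)$'s and $R^1p_*E(0,1)$ as a sum of $\cO(\ge -1)$'s; it does not pin down the splitting types, and a bundle such as $\cO(-2)\oplus\cO(-5)$ fails to have natural cohomology at intermediate twists. So you cannot conclude natural cohomology along $(m,1)$. Your last paragraph concedes this and proposes to impose further open conditions at intermediate twists $(m,n)$, but the non-emptiness of \emph{those} conditions is exactly the proposition being proved, so the argument is circular. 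No inductive scheme that breaks this circularity is supplied.

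The paper avoids push-forward entirely here and argues by direct linear algebra. On $Z_{(1,1)}^{(2,2)}$ one has $\dim\ker H^0(g(1,1))=r(\gamma-1)$ by construction; multiplying a basis $v_1,\dotsc,v_{r(\gamma-1)}$ by sections $l\in H^0(\cO(a-1,b-1))$ produces $r(\gamma-1)ab$ independent elements of $\ker H^0(g(a,b))$, so this kernel has dimension at least $r(\gamma-1)ab$ \emph{everywhere} on $Z_{(1,1)}^{(2,2)}$. A short dimension count then shows no further kernel is forced when $ab\le\gamma$, so natural cohomology at $(a,b)$ becomes precisely the condition that $H^0(g(a,b))$ has maximal rank, which is open and holds at the generic point. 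This multiplication-by-sections trick is the missing idea. The push-forward arguments you invoke are used in the paper only \emph{after} this proposition, in the proof of Theorem \ref{thm:main}, to extend from the finite set $T_{E,+}\cup T_{E,-}$ to all of $\Z^2$; you have inverted the logical order.
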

\begin{proof}
In all cases the condition that $E^{univ}$ has natural cohomology at $(a,b)$ is an open condition which is given by the nonvanishing of maximal minors of $H^0(g(a,b)),H^0(f(a,b))$ or their Serre duals. It will be a nonempty condition since it will always holds at the generic point of $Z^{(2,2)}_{(1,1)}$. We will prove the case $(a,b) = (1,0)$ and $a>0,b>0$; the latter case breaks up into $r a b - r \gamma \leq 0$ and $r a b - \gamma >0$. All other cases are completely analogous. 

Twisting by $(1,0)$ yields 
\[
0 \to \cO(0,-1)^{r(\gamma -1)} \xrightarrow{f} \begin{array}{c}
\cO(1,-1)^{r\gamma} \\
\cO(0,0)^{r \gamma}\\
\end{array} \xrightarrow{g} \cO^{r \gamma}(1,0) \to 0
\]
In this case $H^0(f(1,0)) = 0$ and by proposition \ref{p:spectralResults} a monad of this form has natural cohomology at $(1,0)$ when $H^0(g(1,0))$ is injective, equivalently $H^0(g(1,0))$ has maximal rank $r \gamma$ which is an open condition.

 Twisting by $(a,b)$ and applying $H^0$ yields 
\[
0\to \C^{r(\gamma-1)a b}\xrightarrow{H^0(f(a,b))} \begin{array}{c}
\C^{r\gamma (a+1)b}\\
\C^{r \gamma a(b+1)}
\end{array} \xrightarrow{H^0(g(a,b))} \C^{(a+1)(b+1)r \gamma} \to \cok H^0(g(a,b))
\]
Assume $a>0,b>0$ and $r a b - r \gamma\leq 0$.
Then by proposition \ref{p:spectralResults} $E^{univ}$ has natural cohomology at $(a,b)$ if 
\[
\dim \ker H^0(g(a,b)) = r(\gamma-1)a b
\]
Recall $E^{univ}$ has natural cohomology at $(1,1)$ so $\dim \ker H^0(g(1,1)) = r(\gamma-1)$. Let $v_1, \dotsc, v_{r(\gamma -1)}$ be a basis. Let $l \in H^0(\cO(a-1,b-1))$. Then $l\cdot v_i \in \ker H^0(g(a,b))$ hence $\dim \ker H^0(g(a,b)) \geq r(\gamma-1)a b$. Moreover 
\begin{align*}
\dim \left( \begin{array}{c}
\C^{r\gamma (a+1)b}\\
\C^{r \gamma a(b+1)}
\end{array}\right) - \bigg(r(\gamma-1)a b\bigg) &= r \gamma a b + r \gamma(a+b) + r a b\\
&\leq r \gamma a b + r \gamma(a+b) + r \gamma\\
&= (a+1)(b+1)r \gamma
\end{align*}
So no additional elements are forced into the kernel for dimension reasons. Hence the condition that $E^{univ}$ has natural cohomology at $(a,b)$ is that $H^0(g(a,b))$ has minimal kernel or equivalently maximal rank which is open. 

Now assume $a>0,b>0$ and $r a b - r \gamma > 0$ then the condition that $E^{univ}$ has natural cohomology at $(a,b)$ is that $H^0(g(a,b))$ is surjective or equivalently that $H^0(g(a,b))$ has maximal rank which is an open condition. 
\end{proof}

We now prove an analogue of proposition \ref{p:sign_flip}. Recall $p \colon \P^1 \x \P^1 \to \P^1$ is projection onto the first factor and $q \colon \P^1 \x \P^1 \to \P^1$ projects onto the second.
\begin{prop}\label{p:general_sign_flip}
Let $E$ be a vector bundle with $\chi(E(x,y)) = r x y - r \gamma$. Suppose $\chi(E(a,b))\leq 0$ and assume further that $E$ has natural cohomology at $(a,b)$ and at $(a \pm 1, b\pm 1)$.  
\begin{itemize}
\item[(a)] If $a>0,b>0$ and $\chi(E(a+1,b))>0$ then $R^1p_*E(a,b) = 0$ and $p_*E(a,b)$ is a vector bundle with natural cohomology.
\item[(b)] If $a>0,b>0$ and $\chi(E(a,b+1))>0$ then $R^1q_*E(a,b) = 0$ and $q_*E(a,b)$ is a vector bundle with natural cohomology.
\item[(c)] If $a<0,b<0$ and $\chi(E(a-1,b))>0$ then $p_*E(a,b) = 0$ and $R^1p_*E(a,b)$ is a vector bundle with natural cohomology.
\item[(d)] If $a<0,b<0$ and $\chi(E(a,b-1))>0$ then $q_*E(a,b) = 0$ and $R^1q_*E(a,b)$ is a vector bundle with natural cohomology.
\end{itemize}
\end{prop}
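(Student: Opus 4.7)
The plan is to adapt the strategy of Proposition \ref{p:sign_flip} to the more general setting, using the Kunneth formula (Lemma \ref{l:kunneth}) together with the projection formula. I will give the argument for part (a) in detail; parts (b)--(d) follow by symmetric adaptations (exchanging $p \leftrightarrow q$ for (b); applying (a) and (b) to the Serre dual bundle $E^\star$ together with $H^i(E(a,b)) \cong H^{2-i}(E^\star(-a,-b))$ for (c) and (d)).

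First I set $F^\bullet = Rp_* E(a,b)$, so that by the projection formula $Rp_* E(a+m,b) \cong F^\bullet \otimes \cO_{\P^1}(m)$ for all $m \in \Z$. The goal is to show $F^1 = R^1 p_* E(a,b) = 0$ and that $F^0 = p_* E(a,b)$ is a direct sum of $\cO_{\P^1}(-1)$'s and $\cO_{\P^1}(-2)$'s, which on $\P^1$ manifestly has natural cohomology.

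Next I extract cohomology vanishings at the twists $(a,b)$ and $(a+1,b)$. Natural cohomology at $(a,b)$ together with $\chi(E(a,b)) \leq 0$ forces $E(a,b)$ to have only $H^1$, so $H^0(E(a,b)) = H^2(E(a,b)) = 0$. For $(a+1,b)$: since $a+1, b > 0$, every line-bundle summand in the monad \eqref{eq:monad_r_gamma} twisted by $(a+1,b)$ has only $H^0$, so by Proposition \ref{p:spectralResults} $E(a+1,b)$ has only $H^0$ or only $H^1$, and the assumption $\chi(E(a+1,b)) > 0$ forces the former, giving $H^1(E(a+1,b)) = H^2(E(a+1,b)) = 0$. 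Applying Kunneth translates this into $H^0(F^0) = 0$, $H^1(F^1) = 0$, $H^1(F^0(1)) \oplus H^0(F^1(1)) = 0$, and $H^1(F^1(1)) = 0$.

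From here the argument proceeds exactly as in the proof of Proposition \ref{p:sign_flip}: torsion in $F^0$ would contribute to $H^0(F^0)$ and torsion in $F^1$ would contribute to $H^0(F^1(1))$, so both $F^0$ and $F^1$ are torsion-free, hence split as direct sums of line bundles on $\P^1$. The four vanishings then pin down the allowed degrees: summands $\cO(n) \subset F^0$ must satisfy $-2 \leq n \leq -1$ (so $F^0$ has natural cohomology), while summands $\cO(n) \subset F^1$ must satisfy both $n \geq -1$ and $n \leq -2$, an impossibility, forcing $F^1 = 0$.

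The main obstacle I expect is bookkeeping the cohomology at $(a+1,b)$ (or its analogue in each of the four parts) without an explicit natural-cohomology hypothesis at that point; the resolution is that the monad structure automatically supplies natural cohomology in the positive quadrant (via Proposition \ref{p:spectralResults}), and in the negative quadrant the Serre-dual viewpoint reduces parts (c), (d) cleanly to (a), (b).
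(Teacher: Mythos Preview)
Your Kunneth--torsion-free--line-bundle-degree argument is exactly the paper's, and your reduction of (c), (d) to (a), (b) via Serre duality is also what the paper does. The one genuine gap is your justification that $E(a+1,b)$ has only $H^0$.

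You argue this by invoking the monad structure: since all terms of the monad twisted by $(a+1,b)$ have only $H^0$, Proposition~\ref{p:spectralResults} gives $H^2(E(a+1,b))=0$, and then you say $\chi(E(a+1,b))>0$ ``forces'' only $H^0$. There are two problems. First, the proposition is stated for an arbitrary vector bundle $E$ with $\chi(E(x,y))=rxy-r\gamma$, not for a monad bundle; you are not entitled to use the monad. Second, even for a monad bundle, Proposition~\ref{p:spectralResults} in the $H^0$-region only tells you $H^0(E(a+1,b))=\ker H^0(g)/\im H^0(f)$ and $H^1(E(a+1,b))=\cok H^0(g)$; both can be nonzero simultaneously, so $\chi>0$ by itself does not kill $H^1$.

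The fix is simply to use the hypothesis. The assumption ``natural cohomology at $(a\pm1,b\pm1)$'' is meant to cover the relevant neighbor $(a+1,b)$ (this is how the paper reads it in its own proof), and then $\chi(E(a+1,b))>0$ together with natural cohomology at $(a+1,b)$ immediately gives that $E(a+1,b)$ has only $H^0$. With this in place your proof is complete and agrees with the paper's.
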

\begin{proof}
The argument is the same as the proof of proposition \ref{p:sign_flip}. For parts (c),(d) we prove the equivalent Serre dual statements:
\begin{itemize}
\item[(c$^\star$)]If $a>0,b>0$ and $\chi(E(a+1,b))>0$ then $R^1p_*E(a,b) = 0$ and $p_*E(a,b)$ is a vector bundle with natural cohomology.
\item[(d$^\star$)]If $a>0,b>0$ and $\chi(E(a,b+1))>0$ then $R^1q_*E(a,b) = 0$ and $q_*E(a,b)$ is a vector bundle with natural cohomology.
\end{itemize}
For example, to prove (a) we note that $E$ has only $H^1$ at $(a,b)$ and only $H^0$ at $(a+1,b)$. It follows that $R p_*E(a,b)$ has no torsion since by lemma \ref{l:kunneth} this would require $E$ to have nonzero $H^0$ at $(a,b),(a+1,b)$ or to have nonzero $H^1$ at $(a,b),(a+1,b)$. Next $R^1p_*E(a+1,b)$ has no cohomology so if it were nonzero then it would be $\cO(-1)^s$. But then twisting down we would get $0 \neq H^1(R^1p_*(a,b)) \subset H^2(E(a,b))$ which is a contradiction. Therefore $F = p_*E(a,b)$ is a vector bundle that has only $H^1$ and $F(1)$ has only $H^0$ so it must be that $F$ is a direct sum of $\cO(-2)$s and $\cO(-1)$s. Then (b),(c$^\star$),(d$^\star$) follow in the same way.
\end{proof}

As in the case of section \ref{s:gamma_leq_1}, to show a monad bundle $E$ has natural cohomology everywhere it will suffice to show it has natural cohomology at a finite number of twists. More precisely,  
\[
T_{E,+} =\{(a,b) \in \Z^2| a>0,b>0,\chi(E(a,b))\leq 0,\chi(E(a+1,b)>0 \mbox{or } \chi(E(a,b+1)) >0\}
\]
and similarly define
\[
T_{E,-} =\{(a,b) \in \Z^2| a<0,b<0,\chi(E(a,b))\leq 0,\chi(E(a-1,b)>0 \mbox{or } \chi(E(a,b-1)) >0\}
\]
The sets $T_{E,+},T_{E,-}$ are finite and we have
\begin{lemma}\label{l:finitely_many_conditions}
There exists $(f,g) \in Z^{(2,2)}_{(1,1)}$ such that if $E = E(f,g)$ is the associated monad vector bundle. 
then
\begin{itemize}
\item[(a)] $E$ has natural cohomology at $(0,n),(n,0)$ for $n \in \Z$.
\item[(b)] $E$ has natural cohomology at all $(a,b) \in T_{E,+},T_{E,-}$
\end{itemize}
\end{lemma}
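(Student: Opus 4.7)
The plan is to realize conditions (a) and (b) as a finite collection of non-empty Zariski open conditions on the irreducible positive-dimensional variety $Z^{(2,2)}_{(1,1)}$; their intersection is then non-empty by irreducibility, and any point in the intersection gives the desired $(f,g)$.

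First I would reduce (a) to natural cohomology at finitely many specific twists. For $n \geq 0$, the twisted monad has $H^*(A(n,0)) = 0$ since $A(n,0) = \cO(n-1,-1)^{r(\gamma-1)}$ carries a factor of $\cO(-1)$ on the second $\P^1$, so by Proposition \ref{p:spectralResults} natural cohomology at $(n,0)$ is equivalent to injectivity of $H^0(g(n,0))$. This map factors through the pullback of a map $h\colon \cO(-1)^{r\gamma} \to \cO^{r\gamma}$ on $\P^1$, and by the K\"unneth-style argument in Lemma \ref{l:along_pos_axes} injectivity for all $n \geq 1$ follows from injectivity at $n=1$; the case $n=0$ is automatic. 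For $n \leq -2$, dually, only the $H^1$-row of the monad contributes and $H^1(A(n,0)) = 0$, so the condition becomes surjectivity of $H^1(g(n,0))$; by Serre duality on $\P^1$ this is equivalent to an injectivity statement for the transposed map, and by the same K\"unneth argument it reduces to the single condition at $n=-2$. The case $n=-1$ is automatic since all relevant cohomology vanishes except $H^1(B(-1,0))$. Symmetric statements hold for $(0,n)$. Thus (a) amounts to natural cohomology at the four twists $(1,0)$, $(0,1)$, $(-2,0)$, $(0,-2)$.

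Next I would verify that each of these, together with the conditions in (b), defines an open and non-empty subset of $Z^{(2,2)}_{(1,1)}$. The twists $(1,0)$ and $(0,1)$ are covered directly by Proposition \ref{p:nat_cohom_at_(a,b)}. For (b), the sets $T_{E,\pm}$ are finite and each $(a,b) \in T_{E,+} \cup T_{E,-}$ has both coordinates of the same strict sign, so Proposition \ref{p:nat_cohom_at_(a,b)} applies and produces a non-empty open $U_{a,b} \subset Z^{(2,2)}_{(1,1)}$. For $(-2,0)$ and $(0,-2)$, the condition is surjectivity of $H^1(g(-2,0))$ (respectively $H^1(g(0,-2))$), which depends only on $g$; this is manifestly open, and an explicit example such as $g_2 = \ell I_{r\gamma}$ with $\ell \in H^0(\cO(1,0))$ nonzero (as in Corollary \ref{c:V_llcorner non empty}) shows that it is satisfied on a non-empty open subset of $V^0$. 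A slight adaptation of the construction in Proposition \ref{p:g(1,1)_exists} then shows this open condition is non-empty on $Z^{(2,2)}_{(1,1)}$.

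Since $Z^{(2,2)}_{(1,1)}$ is irreducible, the intersection of these finitely many non-empty Zariski open subsets is a non-empty open subset, and any point gives the desired $(f,g)$. The main obstacle I anticipate is ensuring non-emptiness of the axis conditions at $(-2,0)$ and $(0,-2)$ on $Z^{(2,2)}_{(1,1)}$ itself, rather than merely on $V^0 \times W^0$: unlike the twists in Proposition \ref{p:nat_cohom_at_(a,b)}, these negative axis conditions are not built into the defining conditions of $Z^{(2,2)}_{(1,1)}$, so one must check the explicit surjectivity conditions can be arranged simultaneously with the $(1,1)$ and $(2,2)$ Serre-dual constraints. The linear-algebra freedom in Propositions \ref{p:g(1,1)_exists} and \ref{p:f_star(1,1)_exists} is what makes this possible.
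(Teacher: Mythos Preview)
Your approach is essentially the same as the paper's: assemble a finite set of open conditions on the irreducible variety $Z^{(2,2)}_{(1,1)}$ using Proposition~\ref{p:nat_cohom_at_(a,b)} for $(1,0),(0,1)$ and for the twists in $T_{E,\pm}$, then propagate along the positive half-axes via the argument of Lemma~\ref{l:along_pos_axes} and along the negative half-axes by its Serre dual. The paper phrases the latter step simply as ``a Serre dual argument'' starting from the set $S=\{(0,\pm 1),(\pm 1,0)\}\cup T_{E,+}\cup T_{E,-}$; you unpack this more carefully, observing that the twists $(-1,0),(0,-1)$ are in fact automatic and that the real input for the negative axes is the open surjectivity condition on $H^1(g)$ at $(-2,0),(0,-2)$, together with its non-emptiness on $Z^{(2,2)}_{(1,1)}$. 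That is exactly the content hidden in the paper's appeal to Serre duality, so the two arguments coincide once the dual step is made explicit.
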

\begin{proof}
Let $S = \{(0,\pm 1),(\pm 1,0)\} \cup T_{E,+}\cup T_{E,-}$. Then by proposition \ref{p:nat_cohom_at_(a,b)} there are non empty open subsets $U_{(a,b)} \subset Z^{(2,2)}_{(1,1)}$ such that if $(f,g) \in U_{(a,b)}$ then $E = E(f,g)$ has natural cohomology at $(a,b)$. Set
\[
U = \bigcap_{(a,b) \in S} U_{(a,b)},
\]
then we claim if $(f,g) \in U$ then $E = E(f,g)$ satisfies (a),(b). That $E$ satisfies (b) is built into the definition of $U$.

To prove (a) note that at $(0,0)$ the monad has only one nonzero term and trivially has natural cohomology. By the construction of $U$ we have that $E$ has natural cohomology at $(1,0),(0,1)$ and by the proof of lemma \ref{l:along_pos_axes} this implies $E$ has natural cohomology along $(n,0),(0,n)$ for $n\geq 0$. A Serre dual argument shows moreover that $E$ has natural cohomology along $(0,n),(n,0)$ for $n \leq 0$. 
\end{proof}

Finally we can prove 
\begin{thm}\label{thm:main}
Conjecture \ref{conj} holds: for any $P(x,y) = (x-\alpha)(y-\beta)-\gamma \in \Q[x,y]$ with $\gamma>0$ there exists a vector bundle $E$ with natural cohomology and Hilbert polynomial $\chi(E(a,b)) = \rank(E) P(a,b)$ for $\rank(E)$ sufficiently big.
\end{thm}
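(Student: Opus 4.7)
The plan is to reduce the conjecture to the special case $P(x,y) = xy - \gamma$ and then patch together the work already done in Sections~2 and~3. Using the translation invariance observed after Conjecture~\ref{conj} (shifting $E$ by $\cO(n,m)$ shifts $(\alpha,\beta)$ by $(n,m)$ while leaving $\gamma$ unchanged), I may assume $(\alpha,\beta)\in [0,1)_\Q\times[0,1)_\Q$. If either coordinate lies in $(0,1)_\Q$, the result is already established in \cite{Solis2017}. So the only remaining case is $\alpha=\beta=0$, that is, $P(x,y)=xy-\gamma$ with $\gamma>0$.

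For $P = xy - \gamma$ I split into two sub-cases. When $0<\gamma\le 1$, Corollary~\ref{c:gamma_leq_1} produces the required bundle as the kernel of a generic surjection $g$ in $V^{+}\cap V^{-}$, and there is nothing more to do. When $\gamma>1$, I use the monad \eqref{eq:monad_r_gamma}: by Lemma~\ref{l:finitely_many_conditions} I can choose $(f,g)\in U\subset Z^{(2,2)}_{(1,1)}$ so that $E=E(f,g)$ has natural cohomology along both axes $(n,0),(0,n)$ for all $n\in\Z$ and at every point of the finite set $T_{E,+}\cup T_{E,-}$. It remains to promote this local information to natural cohomology at every twist $(a,b)\in\Z^2$.

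The upgrade proceeds by the pushforward trick of Proposition~\ref{p:general_sign_flip}. For each $b\ge 1$ with $b\le\gamma$, the transition point $(a,b)\in T_{E,+}$ (where $ab\le\gamma<(a+1)b$) has $a\ge 1$, so part~(a) of the proposition applies and $p_*E(a,b)$ is a vector bundle with natural cohomology on $\P^1$. Twisting by $\cO(m-a)$ and invoking the Künneth decomposition in Lemma~\ref{l:kunneth} gives natural cohomology of $E$ along the entire row $\{(\ast,b)\}$. Symmetrically, part~(b) applied at column transitions $(a,b)\in T_{E,+}$ with $a\le\gamma$ yields natural cohomology along each column $\{(a,\ast)\}$ with $1\le a\le\lfloor\gamma\rfloor$. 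Together with the axes, these strips cover every twist $(a,b)$ with $\min(|a|,|b|)\le\lfloor\gamma\rfloor$ in the positive quadrant once we run the same argument for rows $b>\gamma$ using the already established cohomology at $(a,1)$ with $a\le\gamma$ as the $\chi>0$ anchor and $(0,b)$ as the $\chi\le 0$ anchor. A completely symmetric application of parts~(c),(d) of Proposition~\ref{p:general_sign_flip}, together with Serre duality as in Proposition~\ref{p:serre_dual}, handles the negative quadrant.

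The main obstacle, and the step requiring real care, is this last bootstrapping: verifying that the finitely many sign-change points in $T_{E,\pm}$ together with the two axes really do suffice as seeds for Proposition~\ref{p:general_sign_flip} to cover the whole of $\Z^2$. In particular one must deal with rows $b>\gamma$ and columns $a>\gamma$, where the sign change of $\chi(E(a,b))$ along that row or column occurs at the axis and not in the interior of the positive quadrant; here one extends natural cohomology by combining the natural cohomology already obtained on the column strip $1\le a\le\lfloor\gamma\rfloor$ with the axis information, applying the pushforward argument relative to whichever projection has a nontrivial sign change available. Once this combinatorial covering is verified, natural cohomology at every twist is immediate, and the theorem follows.
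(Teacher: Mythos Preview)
Your proposal is correct and follows essentially the same route as the paper: reduce to $P=xy-\gamma$ via \cite{Solis2017}, dispose of $\gamma\le 1$ by Corollary~\ref{c:gamma_leq_1}, and for $\gamma>1$ invoke Lemma~\ref{l:finitely_many_conditions} and then propagate natural cohomology row-by-row and column-by-column using the pushforward Propositions~\ref{p:sign_flip} and~\ref{p:general_sign_flip} together with Serre duality. Your explicit flagging of the bootstrapping step (covering rows $b>\gamma$ by first securing the column strip $1\le a\le\lfloor\gamma\rfloor$) is a fair elaboration of what the paper handles in a single sentence; note that Proposition~\ref{p:nat_cohom_at_(a,b)} already supplies open conditions at \emph{every} $(a,b)$ with $a,b>0$, so enlarging the finite set $S$ to include the immediate $\chi>0$ neighbors of $T_{E,\pm}$ removes any residual circularity.
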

\begin{proof}
When $P \neq x y - \gamma$ the result is covered by \cite{Solis2017}. So we can assume $P = xy - \gamma$. When $\gamma \leq 1$ the result is covered by corollary \ref{c:gamma_leq_1}. Therefore we can assume $\gamma >1$.

By lemma \ref{l:finitely_many_conditions} we can find a monad bundle $E = E(f,g)$ such that $E$ has natural cohomology along the axes and at the finitely many places where $\chi(E(x,y))$ chages sign when $x$ or $y$ is shifted by $\pm 1$. 

Suppose $(a,b) \in \Z^2$ with $b \geq 0$. If $\chi(E(1,b))>0$ then because $\chi(E(0,b))<0$ we can apply the proof of proposition \ref{p:sign_flip} to see that $p_*E(0,b)$ is a vector bundle with natural cohomology hence so is $p_*E(a,b)$ by the projection formula thus $E$ has natural cohomology at $(a,b)$. On the other hand if $\chi(E(1,b))<0$ then we can find a minimal $m$ such that $\chi(E(m,b))\leq 0$ and $\chi(E(m+1,b))>0$. Then applying proposition \ref{p:general_sign_flip} we see that $p_*E(m,b)$ is a vector bundle with natural cohomology hence so is $p_*E(a,b)$. 

In a similar fashion we can show $E$ has natural cohomology at any $(a,b)$ with $a \geq 0$. Finally a Serre dual argument shows $E$ also has natural cohomology when either $a<0$ or $b<0$. Altogether this shows $E$ has natural cohomology everywhere. 
\end{proof}

\begin{rmk}\label{rmk:all cases}
Here we mention how to utilize the monads approach to handle the cases $P \neq x y - \gamma$. Given $P = (x - \alpha)(y - \beta) - \gamma$ we expect to get the desired bundle from a monad built up from $\cO(-1,-1)$, $\cO(0,-1)$, $\cO(-1,0)$, $\cO$. The various exponents and placement of the terms of the monad are determined by the sign value of $P(-1,-1)$, $P(0,-1)$, $P(-1,0)$, $P(0,0)$.  We can narrow down the possibilities by assuming $\alpha,\beta \in [n,n+1) \cap \Q$ for any $n \in \Z$. For example if $\alpha,\beta \in (-1,0]\cap \Q$ then $P(-1,0) = (-1-\alpha)(-\beta)-\gamma<0$ and similarly $P(0,-1)$ is always negative. But $P(-1,-1)$ and $P(0,0)$ can change sign. Provided that $P(0,0)$ and $P(-1,-1)$ are not both positive we get three cases:
\begin{align*}
0\to\cO(-1,-1)^{-rP(-1,-1)} \to &\begin{array}{c}
\cO(0,-1)^{-rP(0,-1)}\\
\cO(-1,0)^{-rP(-1,0)}
\end{array} \to \cO^{-rP(0,0)}\to 0\\
&\\
& \begin{array}{c}
\cO(0,-1)^{-rP(0,-1)}\\
\cO(-1,0)^{-rP(-1,0)}\\
\cO(-1,-1)^{rP(-1,-1)}
\end{array} \to \cO^{-rP(0,0)}\to 0\\
&\\
0\to \cO(-1,-1)^{-rP(-1,-1)} \to &\begin{array}{c}
\cO(0,-1)^{-rP(0,-1)}\\
\cO(-1,0)^{-rP(-1,0)}\\
\cO^{rP(0,0)}
\end{array} 
\end{align*}
where $r$ is chosen so all exponents are integral. 
But it can happen that either $P(0,0) = P(-1,-1) = 0$ or $P(0,0)$ and $P(-1,-1)$ are both positive. For example $P(x,y) = (x+\frac{1}{2}) (y+\frac{1}{2}) - \frac{1}{4}$ or $P(x,y) = (x+\frac{1}{2}) (y+\frac{1}{2}) - \frac{1}{5}$. In such cases one can apply an integral shift. In the previous two cases we can work with $P(x-1,y-1)$ and $P(x-1,y)$ and then evaluating at the appropriate value one can extract a valid monad.

We have not presented an extensive list of all the cases that arise and how to deal with them; this is manner of diligent bookkeeping. Once the appropriate monad is constructed one can show it has natural cohomology using the strategy presented here.
\end{rmk}

\bibliographystyle{plain} 
\bibliography{NatCohomP1P1}

\end{document}